\newcommand\chapart{paper} % This is not a chapter, but a standalone paper.
\begin{document}

\title{\bf Asymptotics and scalings for large product-form networks via
       the Central Limit Theorem}
\author{Guy Fayolle 
       \thanks{Postal address: INRIA --- 
                     Domaine de Voluceau, Rocquencourt, BP105 --- 
                     78153 Le Chesnay, France.}
       \and Jean-Marc Lasgouttes$\ ^*$}

\date{February 1996}

\maketitle

\begin{abstract}
The asymptotic behaviour of a closed BCMP network, with $n$ queues and
$m_n$ clients, is analyzed when $n$ and $m_n$ become simultaneously
large. Our method relies on Berry-Esseen type approximations coming in
the Central Limit Theorem. We construct {\em critical sequences\/}
$m^{\scriptscriptstyle0}_n$, which are necessary and sufficient to
distinguish between saturated and non-saturated regimes for the
network.  Several applications of these results are presented.  It is
shown that some queues can act as bottlenecks, limiting thus the
global efficiency of the system.
\end{abstract}
\nocite{FayLas:3}
% Fichier principal pour BigNet

% Nicer...
\let\oldphi=\phi
\let\phi=\varphi
\let\varphi=\oldphi
\let\eps=\varepsilon
\renewcommand\liminf{\mathop{\rm \underline{lim}}}
\renewcommand\limsup{\mathop{\rm \overline{lim}}}
\def\Re{\mathop{\rm Re}\nolimits}
\def\Im{\mathop{\rm Im}\nolimits}
\def\ci{{\rm i}} % complex number i

% Notation
\def\*#1{{\cal #1}}
\def\m#1{\beta^{\scriptscriptstyle(#1)}}
\def\0{^{\scriptscriptstyle0}}
\def\wh{\widehat}
\def\wt{\widetilde}

% Little hyphenation problem
\hyphenation{uni-form-ly}
            
\section{Introduction}

In many applications (telecommunications, transportation, etc.), it is
desirable to understand the behaviour and performance of stochastic
networks as their size increases. From an engineering point of view,
the problem can be roughly formulated as follows:

\begin{quote}
\em Consider a closed network with $n$ nodes and exactly $m_n$ customers
circulating inside. Find a function $f$, such that $m=f(n)$ yields
an {\em interesting\/} performance of the system as $n$ increases.
\end{quote}

In this study, we start from the so-called {\em product-form\/}
networks, which play an important role in quantitative analysis of
systems. Although the equilibrium state probabilities have then a
simple expression (see for example Kelly~\cite{Kel:1}), non-trivial
problems remain, due to an intrinsic combinatorial explosion in the
formulas, especially in those involving the famous normalizing
constant. To circumvent these drawbacks, the idea is to compute
asymptotic expansions of the characteristic values of the network,
when $m$ and $n$ both tend to infinity.

This approach has been considered by Knessl and Tier~\cite{KneTie:1},
Kogan and Birman~\cite{Kog:1,KogBir:1,BirKog:1} and Malyshev and
Yakovlev~\cite{MalYak:1}. However, it relies on purely analytical
tools, which are difficult to use in a more general setting and, in
our opinion, do not really give a structural explanation of the
phenomena involved.

The method proposed hereafter has direct connections with the {\em
Central Limit Theorem\/}: instead of representing the values of
interest as complex integrals, we express them in terms of
distributions of scaled sums of independent random variables. Besides
giving a clear interpretation of the computations, this allows to
handle directly the general case of single-chain closed networks.  We
show by construction the existence of {\em critical sequences\/}
$m\0_n$ in the following sense: the network saturates if, and only if,
$m_n\gg m\0_n$.  These results can also be interpreted as {\em
insensitivity\/} properties: as the number of stations $n$ and the
number of customers $m_n$ go to infinity, the network is shown to be
equivalent to an {\em open\/} network of $n$ independent queues (having
a total mean number of customers $m_n$), in the sense that both
systems have asymptotically the same finite-dimensional distributions.

The \chapart\ is organized as follows. The model is introduced in
Section~\ref{sec:model}, together with a presentation of the method.
In Section~\ref{sec:llt}, asymptotics of the marginal distribution of
the queue lengths are given under normal conditions and also when some
queues become overloaded.  Section~\ref{sec:scal} unifies the results
and contains the main theorems about scaling. Section~\ref{sec:ass}
and \ref{sec:appl} are devoted to concrete applications of these
results, in particular to service vehicle networks (like the {\em
Praxi\-t\`ele\/} project, now developed at INRIA).
Section~\ref{sec:concl} contains some conclusive remarks. Most of the
technical proofs are postponed in Appendix.

\section{Mathematical model and view of the main results}
\label{sec:model}

Consider a closed BCMP network $\*C_n$ with $n$ queues and $m_n$
clients. The number of clients at queue $k$ at steady state is a
random variable $Q_{k,n}$. The service rate at queue $k$ when there
are $q_k$ customers is $\mu_{k,n}(q_k)$. The routing probability from
queue $k$ to queue $\ell$ is $p_{k,\ell,n}$ and $P_n$ denotes the
transition matrix supposed to be ergodic, with invariant measure
$\pi_n=(\pi_{1,n},\ldots,\pi_{n,n})$, defined by:
\begin{equation}\label{eq:invmeas}
 \pi_nP_n=\pi_n\mbox{\rm \ and }\pi_{1,n}+\cdots+\pi_{n,n}=1.
\end{equation}

Then it is known that, for any $q_1,\ldots,q_n\geq0$ such that
$q_1+\cdots+q_n=m_n$,
\begin{equation}\label{eq:prodform}
 \PP_n(Q_{1,n}=q_1,\ldots,Q_{n,n}=q_n) = 
   Z_{m_n,n}^{-1}\prod_{k=1}^n 
     {\pi_{k,n}^{q_k}\over \mu_{k,n}(1)\cdots \mu_{k,n}(q_k)},
\end{equation}
with the normalizing condition
\begin{equation}\label{eq:prodform:const}
 Z_{m,n}=\sum_{q_1+\cdots+q_n=m}\prod_{k=1}^n 
  {\pi_{k,n}^{q_k}\over \mu_{k,n}(1)\cdots \mu_{k,n}(q_k)}.
\end{equation}

It is worth noting that our analysis applies to any network which has
a product form equilibrium distribution like~(\ref{eq:prodform}). It
includes for example, as soon as the matrix $P_n$ is reversible, all
systems having transition rates of the form
$p_{k,\ell,n}\alpha_{k,n}(q_k)\beta_{\ell,n}(q_\ell)$, in which case
finite capacity situations can be covered
(e.g. $\beta_{\ell,n}(q_\ell)=\ind{q_\ell\leq\bar q_\ell}$). See
Serfozo~\cite{Serf:2} for further examples.

To avoid hiding global results with tedious technicalities, we suppose
throughout the study that, for all $n$, $\*C_n$ contains at least one
queue which, taken in isolation, can be saturated with a finite input
flow (e.g.\ a $M/M/c/\infty$ queue). 

The overall presentation requires definitions and an intermediate lemma,
given in Section~\ref{ssec:model:def}. The informal presentation of
the central results appears in Section~\ref{ssec:model:method}.

\subsection{Preliminaries}\label{ssec:model:def}

Define, for each $k$, $1\leq k\leq n$, the generating function
\[
 f_{k,n}(z) \egaldef \sum_{q=0}^\infty 
   {z^q\over \mu_{k,n}(1)\cdots \mu_{k,n}(q)}.
\]

Note that for each $n$, $f_{k,n}$ has a singularity at finite
distance for at least one $1\leq k\leq n$.

To the original closed network $\*C_n$, we let correspond a new system
$\*O_n(\lambda)$ which is open and consists of $n$ parallel queues,
with service rates $\mu_{k,n}(x)$ and arrival intensity
$\lambda\pi_{k,n}$ at queue $k$, where the choice of $\lambda$ will be
made more precise later. The queue length $X_{k,n}(\lambda)$ of the
$k$-th queue of $\*O_n(\lambda)$ has a distribution given by
\[
 P(X_{k,n}(\lambda)=x)={1\over f_{k,n}(\lambda\pi_{k,n})}
              {(\lambda\pi_{k,n})^x
                 \over \mu_{k,n}(1)\cdots\mu_{k,n}(x)},
\]
and $X_{1,n}(\lambda),\ldots,X_{n,n}(\lambda)$ are independent
variables.  We assume that $X_{k,n}(\lambda)$ has some finite moments
of order $r\geq 2$ and introduce the following notation:

\[
\begin{array}{rclcrcl}
m_{k,n}(\lambda)        &\egaldef& \EE X_{k,n}(\lambda),&&
S_n(\lambda)            &\egaldef& \sum_{k=1}^nX_{k,n}(\lambda),\\[0.3ex]
\m{r}_{k,n}(\lambda)    &\egaldef& \EE |X_{k,n}(\lambda)-m_{k,n}(\lambda)|^r,&&
\m{r}_n(\lambda)        &\egaldef& \sum_{k=1}^n\m{r}_{k,n}(\lambda),\\[0.3ex]
\sigma^2_{k,n}(\lambda) &\egaldef& \m2_{k,n}(\lambda),&&
\sigma_n^2(\lambda)     &\egaldef& \m2_n(\lambda),\\[0.3ex]
\bar\m3_{k,n}(\lambda)  &\egaldef& \EE [X_{k,n}(\lambda)-m_{k,n}(\lambda)]^3,&&
\bar\m3_n (\lambda)     &\egaldef& \sum_{k=1}^n\bar\m3_{k,n}(\lambda).
\end{array}
\]

Let $\phi_{k,n}(\theta\,;\lambda)$ be the characteristic
function of $X_{k,n}(\lambda)-m_{k,n}(\lambda)$. Then, for any real $\theta$, 
\begin{equation}\label{eq:defphikn}
 \phi_{k,n}(\theta\,;\lambda)
   \egaldef\EE e^{\ci(X_{k,n}(\lambda)-m_{k,n}(\lambda))\theta}
   = {f_{k,n}(\pi_{k,n}\lambda e^{\ci\theta})
         \over f_{k,n}(\pi_{k,n}\lambda)}e^{-\ci m_{k,n}(\lambda)\theta},
\end{equation}
and 
\begin{equation}\label{eq:defphin}
 \phi_n(\theta\,;\lambda)
   \egaldef\EE e^{\ci(S_n(\lambda)-\EE S_n(\lambda))\theta}
   = \phi_{1,n}(\theta\,;\lambda)\cdots\phi_{n,n}(\theta\,;\lambda)
\end{equation}

The reason why $\*O_n(\lambda)$ has been introduced is that the main
performance characteristics of the network $\*C_n$ can be expressed
simply in terms of the distribution of
$X_{1,n}(\lambda),\ldots,X_{n,n}(\lambda)$: 

\begin{lem}\label{lem:represent}
\begin{enumerate}
\item
For any choice of $m_n$, there exists a unique $\lambda_n$ such
that 
\begin{equation}\label{eq:lambdan}
  \EE S_n(\lambda_n) 
   = \EE[X_{1,n}(\lambda_n)+\cdots+X_{n,n}(\lambda_n)] = m_n.
\end{equation}

 From now on, unless otherwise stated, all quantities will pertain to
the network $\*O_n(\lambda_n)$ and $\lambda_n$ will be omitted.

\item
Equations~(\ref{eq:prodform}) and~(\ref{eq:prodform:const}) can be
rewritten as
\begin{equation}\label{eq:pf2}
\PP(Q_{1,n}=q_1,\ldots,Q_{n,n}=q_n)
 = {1\over \PP(S_n=m_n)}\prod_{k=1}^n \PP(X_{k,n}=q_k).
\end{equation}

\item
For any $\ell>0$ and $q_1,\ldots,q_\ell\geq0$, the joint distribution
of the number of customers in the queues $1,\ldots,\ell$ of $\*C_n$ is
\begin{eqnarray}\label{eq:marg}
\lefteqn{\PP(Q_{1,n}=q_1,\ldots,Q_{\ell,n}=q_\ell)}\qquad\quad\\
 &=& {\PP(S_n-\sum_{k=1}^\ell X_{k,n} = m_n-\sum_{k=1}^\ell q_k)
      \over \PP(S_n=m_n)}\prod_{k=1}^\ell \PP(X_{k,n}=q_k)\nonumber\\
 &=& \PP(X_{1,n}=q_1,\ldots,X_{\ell,n}=q_\ell\big|S_n=m_n),\nonumber
\end{eqnarray}
and, consequently, $\EE Q_{\ell,n} = \EE[X_{\ell,n}\big|S_n=m_n]$.

\item For any $1\leq \ell\leq n$, 
\begin{equation}\label{eq:mean}
\EE Q_{\ell,n} = m_{\ell,n}{\PP(S_n-X_\ell+\wt X_\ell=m_n)
                       \over \PP(S_n=m_n)},
\end{equation}
where $\wt X_{\ell,n}$ is an integer-valued r.v., independent from
everything else and having distribution
\[
 \PP(\wt X_{\ell,n}=x)={x\PP(X_{\ell,n}=x)\over m_{\ell,n}}.
\]
\end{enumerate}
\end{lem}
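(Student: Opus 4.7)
I would treat the four items in order, with Part~1 doing essentially all the analytic work and the rest being structured bookkeeping.

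For Part~1 I set $\psi_n(\lambda) \egaldef \EE S_n(\lambda) = \sum_k \lambda\pi_{k,n}f'_{k,n}(\lambda\pi_{k,n})/f_{k,n}(\lambda\pi_{k,n})$ on $[0,\lambda^*_n)$, where $\lambda^*_n \egaldef \min_k R_{k,n}/\pi_{k,n}$ and $R_{k,n}$ is the radius of convergence of $f_{k,n}$. A direct differentiation gives $\psi_n'(\lambda) = \sigma^2_n(\lambda)/\lambda > 0$ (equivalently, log-convexity of each $f_{k,n}$), so $\psi_n$ is strictly increasing, giving uniqueness. For existence I would invoke the standing hypothesis on $\*C_n$: some queue $k^\star$ saturates at a finite input flow, which forces $R_{k^\star,n}<\infty$ and $\EE X_{k^\star,n}(\lambda) \to \infty$ as $\lambda\pi_{k^\star,n} \uparrow R_{k^\star,n}$. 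Hence $\psi_n$ is continuous and surjective onto $[0,\infty)$, and the intermediate value theorem yields the unique $\lambda_n$ with $\psi_n(\lambda_n) = m_n$.

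Part~2 is purely algebraic. Using the explicit marginals of $\*O_n(\lambda_n)$, the product $\prod_k \PP(X_{k,n}=q_k)$ equals, on the simplex $q_1+\cdots+q_n = m_n$, the product-form weight appearing in (\ref{eq:prodform}) multiplied by a factor depending only on $n$ and $m_n$ (namely $\lambda_n^{m_n}/\prod_k f_{k,n}(\lambda_n\pi_{k,n})$). Summing both sides of the candidate identity (\ref{eq:pf2}) over this simplex collapses the right-hand product into $\PP(S_n=m_n)$ (by independence of the $X_{k,n}$) and the left side into $1$, so the prefactor is forced to be $1/\PP(S_n=m_n)$; this yields (\ref{eq:pf2}) and, as a by-product, expresses $Z_{m_n,n}$ in terms of $\PP(S_n=m_n)$. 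Part~3 then follows by summing (\ref{eq:pf2}) over $q_{\ell+1},\ldots,q_n$, recognising the result as the convolution probability $\PP\bigl(S_n-\sum_{k=1}^\ell X_{k,n} = m_n - \sum_{k=1}^\ell q_k\bigr)$, and invoking Bayes to rewrite the ratio as a conditional distribution. For Part~4 I take $\ell=1$ in (\ref{eq:marg}), write $\EE Q_{\ell,n} = \sum_q q\,\PP(Q_{\ell,n}=q)$, and absorb the factor $q$ into $\PP(X_{\ell,n}=q)$ so as to recognise $m_{\ell,n}\PP(\wt X_{\ell,n}=q)$; the remaining sum over $q$ is the convolution of independent integer variables, which is exactly $\PP(S_n-X_{\ell,n}+\wt X_{\ell,n}=m_n)$.

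The only genuinely delicate step is the surjectivity claim in Part~1: without the saturation hypothesis one could, in principle, have a network in which every queue is of $M/M/\infty$ type, so that $\psi_n$ stays bounded and no $\lambda_n$ exists once $m_n$ is large. Everything else is routine bookkeeping --- but it is the bookkeeping that anchors the entire paper, since the representation (\ref{eq:pf2}) reduces the intractable normalising constant $Z_{m_n,n}$ to the single quantity $\PP(S_n=m_n)$, which is precisely what subsequent local central limit estimates are designed to handle.
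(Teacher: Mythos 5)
Your proposal follows essentially the same route as the paper: monotonicity of $\lambda\mapsto\EE S_n(\lambda)$ via $\partial_\lambda m_{k,n}=\sigma^2_{k,n}/\lambda>0$ for Part~1, identification of the normalizing prefactor with $\PP(S_n=m_n)$ by summing over the simplex for Part~2, marginalization plus Bayes for Part~3, and size-biasing for Part~4. You are in fact slightly more careful than the paper on the existence half of Part~1 (the paper simply asserts that $\EE S_n(\lambda)$ "goes to infinity with $\lambda$", while you correctly pin the divergence to the saturable-queue hypothesis and the finite radius of convergence), and you spell out Part~4, which the paper dispatches with "derived similarly."
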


Note that $\lambda_n$ can be obtained as the unique solution of the
equation
\begin{equation}\label{eq:eqlambdan}
 m_n=\sum_{k=1}^n {\pi_{k,n}\lambda_n f'_{k,n}(\pi_{k,n}\lambda_n)
                    \over f_{k,n}(\pi_{k,n}\lambda_n)}.
\end{equation}

While this equation is in general impossible to solve explicitly,
$\lambda_n$ can be computed numerically using classical
methods.

\begin{proof}{}
A straightforward computation yields, for all $1\leq k\leq n$,
\begin{equation}\label{eq:derm}
{\partial m_{k,n}(\lambda)\over\partial\lambda}
  = {\sigma_{k,n}^2(\lambda)\over\lambda}\;>\;0.
\end{equation}

The mean number of clients in $\*O_n(\lambda)$ is thus a strictly increasing
function of $\lambda$, which equals zero when $\lambda=0$ and goes
to infinity with $\lambda$. This proves the first assertion of the
lemma.

Define 
\[
 Y_n\egaldef {Z_{m_n,n}\lambda_n^{m_n}
                \over \prod_{k=1}^n f_{k,n}(\pi_{k,n}\lambda_n)}.
\]

Then (\ref{eq:prodform}) reads
\[
 \PP_n(Q_{1,n}=q_1,\ldots,Q_{n,n}=q_n) = 
   {1\over Y_n}\prod_{k=1}^n \PP(X_{k,n}=q_k),
\]
which yields (\ref{eq:pf2}), since
\[
 Y_n=\sum_{q_1+\cdots+q_n=m_n}\prod_{k=1}^n \PP(X_{k,n}=q_k)
    =\PP(X_{1,n}+\cdots+X_{n,n}=m_n).
\]

Equation~(\ref{eq:mean}) and the first part of (\ref{eq:marg}) are
derived similarly. For the second part of (\ref{eq:marg}), we simply
note that
\begin{eqnarray*}
\lefteqn{{\textstyle\PP(S_n-\sum_{k=1}^\ell X_{k,n} 
                       = m_n-\sum_{k=1}^\ell q_k)}
    \prod_{k=1}^\ell \PP(X_{k,n}=q_k)}\qqqq\\
 &=& \PP(S_n=m_n|X_{1,n}=q_1,\ldots,X_{\ell,n}=q_\ell)
     \prod_{k=1}^\ell \PP(X_{k,n}=q_k)\\
 &=& \PP(X_{1,n}=q_1,\ldots,X_{\ell,n}=q_\ell|S_n=m_n)
     \PP(S_n=m_n).
\end{eqnarray*}
\end{proof}

\subsection{Informal description of the method}\label{ssec:model:method}

Most of the derivations obtained in the \chapart\ are based on the
various representations given in Lemma~\ref{lem:represent}. Whereas
the studies \cite{Kog:1,KogBir:1,BirKog:1,MalYak:1} use mainly
saddle-point methods, our approach relies on direct limit theorems for
the distribution of $S_n$.

For example, assume that $S_n-m_n$ satisfies a local limit theorem
such as:

\begin{quote}\em 
Under ``suitable'' conditions, there exists a distribution with density
$f$ and a sequence $a_n$ such that, for any integer $x$,
\begin{equation}\label{eq:model:llt}
 \lim_{n\to\infty} a_n\PP(S_n-m_n=x) - f\Bigl({x\over a_n}\Bigr) = 0.
\end{equation}
\end{quote}

Then Lemma~\ref{lem:represent} will yield
\[
\PP(Q_{1,n}=q_1,\ldots,Q_{n,n}=q_n)
 \;\approx\; {a_n\over f(0)}\prod_{k=1}^n \PP(X_{k,n}=q_k),
\]
and, for any finite $\ell$, 
\[
\PP(Q_{1,n}=q_1,\ldots,Q_{\ell,n}=q_\ell)
 \;\approx\; \prod_{k=1}^\ell \PP(X_{k,n}=q_k).
\]

This amounts to say that the joint distribution of any {\em finite}
number of queues in the BCMP network $\*C_n$ is, at steady state,
asymptotically equivalent to the product distribution of the
corresponding queues in the system $\*O_n$.

It is at this moment important to emphasize that we {\em do not}
require any ``smooth'' limiting behaviour for $\*O_n$, which is
somehow an instrumental network, computationally easier to evaluate.

\medskip
To prove local limit theorems like~(\ref{eq:model:llt}), it is
necessary to investigate carefully the behaviour of the variables
$X_{k,n}$.  In particular, since $\EE S_n=m_n<\infty$, all queues in
$\*O_n$ are ergodic, which reads, for any $1\leq k\leq n$,
\[
 \lambda_n\pi_{k,n}<\mu_{k,n}\leq\infty, 
\]
or, equivalently,
\begin{equation}\label{eq:defrhon}
 \rho\0_n\egaldef\lambda_n\max_{1\leq k\leq n}{\pi_{k,n}\over\mu_{k,n}}<1,
\end{equation}
where typically 
\[ 
 \mu_{k,n}=\liminf_{q\to\infty}\sqrt[q]{\mu_{k,n}(1)\cdots\mu_{k,n}(q)}.
\]

Three main situations have been analyzed:
\begin{enumerate}
\item {\em $\rho\0_n$ is bounded away from $1$\/}: then $S_n/\sigma_n$
satisfies a local Central Limit Theorem and tends in distribution to a
{\em normal} law (see Theorem~\ref{thm:m0n});
\item {\em $\rho\0_n\to1$ and the supremum in~(\ref{eq:defrhon}) is
attained for a finite number of queues\/}: then the network subdivides
into two subsets, the ``saturated'' queues and the rest of the
network. As shown in Theorem~\ref{thm:scal}, under mild regularity
assumptions, there exists a sequence $\alpha_n$ such that 
$S_n/\alpha_n$ tends to a {\em gamma} law;
\item {\em $\rho\0_n\to1$ and the supremum in~(\ref{eq:defrhon}) is
attained for an unbounded number of queues\/}: $S_n/\sigma_n$ again
tends in distribution to a {\em normal} law (see
Theorem~\ref{thm:xiinf}).
\end{enumerate}

In fact, Theorems~\ref{thm:m0n}, \ref{thm:scal} and \ref{thm:xiinf}
quoted above are general, in the sense that they provide a
construction of efficient scalings in terms of $m_n$, the number of
customers: the existence of {\em critical sequences} $m\0_n$ for the
network $\*C_n$ is shown by explicit construction. Under reasonable
assumptions, these sequences are {\em necessary and sufficient} to
discriminate between saturated and non saturated regimes. This is
similar to phase transition phenomena observed in~\cite{MalYak:1},
where it was assumed that $m_n/n\to\lambda>0$ (see
Section~\ref{ssec:MalYak}). Clearly, for a non-saturated regime to
exist as $n\to\infty$, it is necessary to have $m_n=O(n)$; this
condition is not sufficient (see Section~\ref{ssec:tight}).

Condition~(\ref{eq:defrhon}) can be used to determine an upper bound
for $\lambda_n$ and to exhibit queues which act as bottlenecks in the
network $\*C_n$ (see Section~\ref{sec:scal}). 

\medskip
{\noindent\bf Remark }Rather than simple limit theorems, the results in
Sections~\ref{sec:llt} and~\ref{sec:scal} are given in terms of
asymptotic expansions, using the operators $O$ and $\Omega$ defined as
follows:
\begin{eqnarray*}
 a(\eta) = O(b(\eta)), 
  &\mbox{\rm iff} &
   \exists K>0,\ \forall \eta,\ |a(\eta)|\leq K|b(\eta)|,\\
 a(\eta) = \Omega(b(\eta)), 
  &\mbox{\rm iff} & 
    a(\eta) = O(b(\eta)) \mbox{\rm\ and } b(\eta)=O(a(\eta)),
\end{eqnarray*}
where $\eta$ is some unspecified argument. Unless otherwise stated, all
these bounds are {\em uniform} with respect to $n$ and all queue indexes.

\section{Local limit theorems and asymptotic expansions}
\label{sec:llt}

In this section, we compute estimates of several performance measures
of $\*C_n$ by means of local limit theorems on sums of independent
random variables. The two series of results presented here are of
somewhat different nature: whereas the conditions of
Proposition~\ref{pro:clt:norm} depend on moments,
Proposition~\ref{pro:lt:pole} relies on analytic properties of the
generating function of some queues.

\subsection{Normal traffic case}\label{ssec:stab}

When the queues are not saturated (in a sense made more precise in
Proposition~\ref{pro:clt:norm}), it is possible to prove local Central
Limit Theorems, relying more exactly on Berry-Esseen type expansions
(see for instance Feller~\cite{Fel:1}).

Define $\gamma^2_{k,n}$ from $X_{k,n}$ as in Lemma~\ref{lem:majcar} of
the appendix, and let 
\[ 
 \gamma^2_n \egaldef \gamma^2_{1,n}+\cdots+\gamma^2_{n,n} \leq \sigma_n^2.  
\]

\begin{pro}\label{pro:clt:norm}
\begin{enumerate}
\item Let, for any $0<r\leq1$ such that $\m{2+r}_n$ exists,
\[
 \delta_n^r\egaldef {1\over2}{\sigma^2_n\over\m{2+r}_n}.
\]

Let $\gamma_n\delta_n\to\infty$ as $n\to\infty$. Then, for any integer
$x$, the following approximation holds {\em uniformly in $x$}:
\begin{eqnarray}\label{eq:clt:norm}
\lefteqn{\sigma_n\PP(S_n-m_n=x)
    -{1\over\sqrt{2\pi}}e^{-{x^2\over2\sigma_n^2}}}\qqqq\qqqq\\
   &=& O\biggl({\m{2+r}_n\over\sigma_n^{2+r}}\biggr)
      +O\biggl({\sigma_n\over\gamma_n^2\delta_n}
           \exp\Bigl(-{\gamma^2_n\delta_n^2\over5}\Bigr)\biggr).\nonumber
\end{eqnarray}
\item Let, for any $0<r\leq1$ such that $\m{3+r}_n$ exists,
\[
 \delta_n\egaldef {1\over2}{\sigma^2_n\over\m3_n}.
\]

Let $\gamma_n\delta_n\to\infty$ as $n\to\infty$. Then, for any integer
$x$, the following approximation holds {\em uniformly in $x$}:
\begin{eqnarray}\label{eq:clt:norm2}
\lefteqn{\sigma_n\PP(S_n-m_n=x)
    -{1\over\sqrt{2\pi}}e^{-{x^2\over2\sigma_n^2}}
    \biggr[1+{\bar\m3_n\over6\sigma_n^3}
               \Bigl({x^3\over\sigma_n^3}-3{x\over\sigma_n}\Bigr)
    \biggr]}\qqqq\qqqq\\
   &=& O\biggl({\m{3+r}_n\over\sigma_n^{3+r}}\biggr)
      +O\biggl({\sigma_n\over\gamma_n^2\delta_n}
           \exp\Bigl(-{\gamma^2_n\delta_n^2\over5}\Bigr)\biggr).\nonumber
\end{eqnarray}
\end{enumerate}
\end{pro}

\begin{proof}{}
  See Appendix~\ref{app:proof}
\end{proof}

The main assumption of the previous proposition is classical, since it is
nothing else but Lyapounov's condition, popular in the Central Limit
Problem:
\begin{equation}\label{eq:lyap}
 \mbox{\em for some }r>0,
        \ \lim_{n\to\infty} {\m{2+r}_n\over\sigma_n^{2+r}}=0.
\end{equation}

This condition yields in particular (see e.g.\ Lo\`eve~\cite{Loe:1})
\begin{equation}\label{eq:uan}
 \lim_{n\to\infty} \max_{1\leq k\leq n} {\sigma_{k,n}\over\sigma_n} =0,
\end{equation}
which in turn implies the {\em uniform asymptotic negligibility\/} of
the $X_{k,n}$'s. Note that it would be possible by truncation methods
to prove similar results without requiring the existence of moments.

We are now in a position to present some basic estimates when the size
of the network increases.

\begin{thm}\label{thm:stab}
Let $r$ be a real number such that $0<r\leq 2$.  Assume that
$\sigma_n=O(\gamma_n)$, that $\m{2+r}_n$ exists and
$\m{2+r}_n/\sigma_n^{2+r}\to0$ as $n\to\infty$. Then the following
asymptotic expansions hold.
\begin{enumerate}
\item 
\begin{eqnarray}\label{eq:normal:distrib}
\lefteqn{\PP(Q_{1,n}=q_1,\ldots,Q_{n,n}=q_n)}\qqqq\\
 &=& \sqrt{2\pi}\sigma_n \prod_{k=1}^n\PP(X_{k,n}=q_k)
   \biggl[1+O\biggl({\m{2+r}_n\over \sigma_n^{2+r}}\biggr)\biggr].\nonumber
\end{eqnarray}
\item For any finite $\ell$, if 
$[\sum_{j=1}^\ell m_{j,n}-q_j]/\sigma_n\to0$,
\begin{equation}\label{eq:normal:marg}
\PP(Q_{1,n}=q_1,\ldots,Q_{\ell,n}=q_\ell)=
 \prod_{k=1}^\ell\PP(X_{k,n}=q_k)\Bigl[1+O(\eps_{1,n})\Bigr],
\end{equation}
\begin{eqnarray*}
\eps_{1,n}
  &=& {\m{2+r}_n\over \sigma_n^{2+r}}
        + {\sum_{j=1}^\ell\sigma_{j,n}^2
             +(\sum_{j=1}^\ell m_{j,n}-q_j)^2\over\sigma_n^2}\\
  & & {}+\ind{r>1}{(\sum_{j=1}^\ell m_{j,n}-q_j)\bar\m3_n\over\sigma_n^4}.
\end{eqnarray*}
\item For any $j$,
\begin{equation}\label{eq:normal:mean}
 \EE Q_{j,n} = \EE X_{j,n}\Bigl[1+O(\eps_{2,n})\Bigr],
\end{equation}
\[
\eps_{2,n} = {\m{2+r}_n\over \sigma_n^{2+r}}
             +{\sigma_{j,n}^2\over\sigma_n^2}
       +{\m{2+r}_{j,n}\over m_{j,n}\sigma_n^{1+r}}
       +\ind{r>1}{\bar\m3_n\over\sigma_n^4}\sigma_{j,n}
            \Bigl(1+{\sigma_{j,n}\over m_{j,n}}\Bigr).
\]
\end{enumerate}
\end{thm}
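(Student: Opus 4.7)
The plan is to combine the representations in Lemma~\ref{lem:represent} with the local Central Limit Theorem of Proposition~\ref{pro:clt:norm}, applied once to each probability appearing in the numerators and denominators. Under the hypotheses of the theorem, the Lyapunov-type bound $\m{2+r}_n/\sigma_n^{2+r}\to 0$ forces $\delta_n\to\infty$, and combined with $\sigma_n=O(\gamma_n)$ gives $\gamma_n\delta_n\to\infty$; moreover $\gamma_n^2\delta_n^2$ grows like a power of $\sigma_n^{2+r}/\m{2+r}_n$, so the exponential remainder $\exp(-\gamma_n^2\delta_n^2/5)$ is dominated by $O(\m{2+r}_n/\sigma_n^{2+r})$. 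Part~1 then follows immediately: Proposition~\ref{pro:clt:norm}(1) applied at $x=0$ gives
\[
 \PP(S_n=m_n)=\frac{1}{\sqrt{2\pi}\,\sigma_n}\Bigl[1+O\bigl(\m{2+r}_n/\sigma_n^{2+r}\bigr)\Bigr],
\]
which plugged into~(\ref{eq:pf2}) yields~(\ref{eq:normal:distrib}).

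For Part~2, I work from Lemma~\ref{lem:represent}(3): the numerator is $\PP(T_n=y)$, where $T_n$ denotes $S_n$ with its first $\ell$ summands removed and $y=m_n-\sum_{k=1}^\ell q_k$, so that $T_n-\EE T_n=\sum_{k=1}^\ell(m_{k,n}-q_k)$ on $\{T_n=y\}$. Since $\ell$ is fixed, the standard deviation, third absolute moment, $\gamma$-parameter and $(2+r)$-th moment of $T_n$ differ from those of $S_n$ by $O\bigl(\sum_{k=1}^\ell\sigma_{k,n}^2\bigr)$-type corrections, negligible by the uniform asymptotic negligibility~(\ref{eq:uan}) implied by Lyapunov's condition. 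I then apply Proposition~\ref{pro:clt:norm}(1) when $r\leq 1$, and Proposition~\ref{pro:clt:norm}(2) with $r'=r-1$ when $r>1$, so that in both cases the remainder is $O(\m{2+r}_n/\sigma_n^{2+r})$. Taylor-expanding $e^{-z^2/2}$ at $z=0$ (the hypothesis $\sum(m_{j,n}-q_j)/\sigma_n\to 0$ making this legitimate) produces the $\bigl(\sum(m_{j,n}-q_j)\bigr)^2/\sigma_n^2$ term, while the linear part of the cubic bracket in~(\ref{eq:clt:norm2}) produces the $\ind{r>1}\bigl(\sum(m_{j,n}-q_j)\bigr)\bar\m3_n/\sigma_n^4$ term (the cubic part being of strictly smaller order). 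Dividing by the estimate of $\PP(S_n=m_n)$ obtained in Part~1 then yields~(\ref{eq:normal:marg}).

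For Part~3, the same strategy applies to~(\ref{eq:mean}) with the perturbed sum $T'_n=S_n-X_{j,n}+\wt X_{j,n}$. Direct computation of the moments of the size-biased variable gives $\EE[\wt X_{j,n}^{\,p}]=\EE X_{j,n}^{p+1}/m_{j,n}$, whence $\EE T'_n-m_n=\sigma_{j,n}^2/m_{j,n}$ and $T'_n-\EE T'_n=-\sigma_{j,n}^2/m_{j,n}$ on $\{T'_n=m_n\}$; analogous manipulations at order $2+r$ contribute the extra summand $\m{2+r}_{j,n}/(m_{j,n}\sigma_n^{1+r})$ to the Lyapunov ratio of $T'_n$. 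Applying Proposition~\ref{pro:clt:norm} to $T'_n$, expanding the Gaussian at $-\sigma_{j,n}^2/m_{j,n}$, and, when $r>1$, expanding the cubic bracket (which contributes $\ind{r>1}(\bar\m3_n/\sigma_n^4)\sigma_{j,n}(1+\sigma_{j,n}/m_{j,n})$), delivers $\PP(T'_n=m_n)/\PP(S_n=m_n)$ with error $\eps_{2,n}$, hence~(\ref{eq:normal:mean}).

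The main technical obstacle is in Part~3: the size-biased tilt $\wt X_{j,n}$ shifts the moments of $X_{j,n}$ upward by one order, so the Lyapunov ratios that govern the error in Proposition~\ref{pro:clt:norm} applied to $T'_n$ must be recomputed, and their smallness must be checked \emph{uniformly} in $j\in\{1,\dots,n\}$. A secondary but indispensable point, common to all three parts, is verifying that the exponentially small remainder $\exp(-\gamma_n^2\delta_n^2/5)$ in Proposition~\ref{pro:clt:norm} is actually dominated by the polynomial bound $\m{2+r}_n/\sigma_n^{2+r}$; this is precisely where the assumption $\sigma_n=O(\gamma_n)$ enters, since otherwise a vanishing ratio $\gamma_n/\sigma_n$ could let the exponential term prevail.
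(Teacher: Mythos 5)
Your proposal is correct and follows essentially the same route as the paper: apply the local limit expansion of Proposition~\ref{pro:clt:norm} to the representations of Lemma~\ref{lem:represent}, use the first-order version for $r\le1$ and the Edgeworth-type version (with $r'=r-1$) for $1<r\le2$, expand the resulting Gaussian ratio around its center, and dominate the exponentially small tail using $\sigma_n=O(\gamma_n)$. Your derivation of part~3 via the size-biased representation~(\ref{eq:mean}) is the natural reading of the paper's terse statement, and your moment calculations for $\wt X_{j,n}$ and $T'_n$ correctly reproduce the extra terms in $\eps_{2,n}$; no gap detected.
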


\begin{proof}{}
Equation~(\ref{eq:normal:distrib}) is a simple application of
Proposition~\ref{pro:clt:norm} to (\ref{eq:pf2}).

To prove (\ref{eq:normal:marg}) from (\ref{eq:marg}) when $r\leq1$, we
simply write
\begin{eqnarray*}
 \lefteqn{{\PP(S_n-\sum_{k=1}^\ell X_{k,n} 
                       = m_n-\sum_{k=1}^\ell q_k)
           \over \PP(S_n=m_n)}}\qquad\quad\\
 &=& \Bigl(1-{\textstyle{\sum_{j=1}^\ell\sigma_{j,n}^2
             \over\sigma_n^2}}\Bigr)^{-{1\over2}}
   \biggl[1+O\biggl({\m{2+r}_n\over \sigma_n^{2+r}}
           +e^{-{\bigl(\sum_{j=1}^\ell m_{j,n}-q_j\bigr)^2
                        \over2\sigma_n^2}}
           -1\biggr)\biggr],
\end{eqnarray*}
and use the relation $|e^{-u^2/2}-1|\leq u^2$. When $1<r\leq 2$, it
suffices to take into account the inequality
\[
 (u^3-3u)e^{-u^2/2}=O(u).
\]

Relation~(\ref{eq:normal:mean}) is also derived from (\ref{eq:marg}).
\end{proof}

\subsection{Heavy traffic case}\label{ssec:pole}

We proceed now to analyze the behavior of the network $\*C_n$ when
some queues saturate, as $n\to\infty$. This, in particular, implies
that the Lyapounov condition~(\ref{eq:lyap}) is no more valid. In
fact, after a suitable normalization, $S_n-m_n$ will be shown to
converge in distribution to a random variable having a gamma
distribution, under the broad assumption that the first singularities
of the relevant generating functions are algebraic.

Let, for some $\rho\0_n\in[0,1[$ %] for emacs...
(to be specified in Section~\ref{sec:scal}),
\[
 \omega_n(\theta)\egaldef{1-\rho\0_n\over 1-\rho\0_ne^{\ci\theta}},
\]
and assume

\begin{ass}\label{ass:pole}
There exists a set $\*F\0_n$ of ``saturable'' queues, such that, for
all $k\in\*F\0_n$, there exist a real number $\xi_{k,n}$ and a
function $\psi_{k,n}(\theta)$ satisfying the relation
\[
 \phi_{k,n}(\theta)=e^{-\ci m_{k,n}\theta}
                    \omega_n^{\xi_{k,n}}(\theta)\psi_{k,n}(\theta).
\]

Moreover, $\psi'_{k,n}(\theta)=O(1)$, uniformly in $k$ and $n$, and
there exists a constant $\xi_{\max}$ such that
\[
 1\leq\xi_{k,n}<\xi_{\max}<\infty.
\]
\end{ass}

Clearly, the term $\omega_n^{\xi_{k,n}}(\theta)$ coming in the
definition of $\phi_{k,n}(\theta)$ emphasizes the fact that the
generating function $f_{k,n}(z)$ pertaining to queue $k\in\*F\0_n$ has
its first singularity which is algebraic of order $\xi_{k,n}$. If, in
addition, $\rho\0_n\to1$ as $n\to\infty$, the working conditions of
the system ensure all queues in $\*F\0_n$ saturate so that, in
particular, $\EE X_{k,n}\sim\xi_{k,n}\alpha_n$, where
\[
\alpha_n\egaldef{\rho\0_n\over1-\rho\0_n}.
\]

While this assumption covers a wide range of known queues, it is clear
that other types of singularities could be handled via the same
method.

Let
\[
 \xi_n\egaldef\sum_{k\in\*F\0_n}\xi_{k,n}.
\]
and define the total characteristic function of the queues in
$\*F_n\setminus\*F\0_n$ by
\[
 \wh\phi_n(\theta)\egaldef \prod_{k\not\in\*F\0_n}\phi_{k,n}(\theta).
\]

Let $r$ be a real number, $0<r\leq1$. Hereafter, $\hat\sigma_n$,
$\hat\m{2+r}_n$, $\hat\gamma_n$ and $\hat\delta_n$ will denote
quantities having the same meaning as in
Proposition~\ref{pro:clt:norm}, but related to $\wh\phi_n(\theta)$.

The counterpart of Theorem~\ref{thm:stab} now reads, in the case of
heavy operating conditions:
\begin{pro}\label{pro:lt:pole}
Let $\rho_n\0\to1$. If $\xi_n$ is bounded, $\hat\sigma_n/\alpha_n\to0$
and $\hat\delta_n\hat\gamma_n\to\infty$ as $n\to\infty$, then the
following estimate holds:
\begin{eqnarray}\label{eq:pro:lt:pole}
\lefteqn{\alpha_n\PP(S_n-m_n=x)
    -{(\xi_n+{x\over\alpha_n})^{\xi_n-1}e^{-\xi_n+{x\over\alpha_n}}
                               \over\Gamma(\xi_n)}}\qqqq\\
 & = &O\biggl(\Bigl({\hat\sigma_n\over\alpha_n}\Bigr)^2+
              {1\over\alpha_n}+{\hat\m{2+r}_n\over\hat\sigma_n^{2+r}}
                  \Bigl({\hat\sigma_n\over\alpha_n}\Bigr)^{2+r}
                   +{\hat\m{2+r}_n\over\hat\sigma_n^{2+r}}
                  \Bigl({\hat\sigma_n\over\alpha_n}\Bigr)^{\xi_n-1}
                   \biggr)\nonumber\\
 &   & {}+O\biggl({e^{-{\hat\gamma_n^2\hat\delta_n^2\over5}}
               \over\hat\gamma_n^2\hat\delta_n^{\xi_n+1}\alpha_n^{\xi_n-1}}
        \biggr).\nonumber
\end{eqnarray}
\end{pro}

\begin{proof}{}
  See Appendix~\ref{app:proof}
\end{proof}

The estimates of Proposition~\ref{pro:lt:pole} allow to establish the main
result of this section.

\begin{thm}\label{thm:pole}
Let $\hat\sigma_n/\alpha_n\to0$,
$\hat\m{2+r}_n/\hat\sigma^{2+r}_n\to0$ and
$\hat\sigma_n=O(\hat\gamma_n)$ as $n\to\infty$. Then the following
expansions hold when $\xi_n$ is uniformly bounded:
\begin{enumerate}
\item for any $q_1,\ldots,q_n\geq0$,
\begin{eqnarray}\label{eq:gamma:prob}
\lefteqn{\PP(Q_{1,n}=q_1,\ldots,Q_{n,n}=q_n)}\qqqq\\
  &=&{\alpha_n\Gamma(\xi_n)\over e^{-\xi_n}\xi_n^{\xi_n-1}}
   \prod_{k=1}^n\PP(X_{k,n}=q_k)\Bigl[1+O(\eps_n)\Bigr],\nonumber
\end{eqnarray}
with
\[
 \eps_n=\Bigl({\hat\sigma_n\over\alpha_n}\Bigr)^2
           +{1\over\alpha_n}
           +{\hat\m{2+r}_n\over\hat\sigma^{2+r}_n}
                \Bigl({\hat\sigma_n\over\alpha_n}\Bigr)^{2+r}
           +{\hat\m{2+r}_n\over\hat\sigma^{2+r}_n}
                \Bigl({\hat\sigma_n\over\alpha_n}\Bigr)^{\xi_n-1};
\] 
\item for any finite $\ell$, such that $\*F\0_n\cap[1,\ell]=\emptyset$,
\begin{eqnarray}\label{eq:gamma:marg}
\lefteqn{\PP(Q_{1,n}=q_1,\ldots,Q_{\ell,n}=q_\ell)}\qqqq\\
  &=&\prod_{k=1}^\ell\PP(X_{k,n}=q_k)
    \biggl[1+O(\eps_n)
           +O\biggl({\sum_{k=1}^\ell m_{k,n}-q_k\over\alpha_n}\biggr)\biggr].
                                               \nonumber
\end{eqnarray}
\item for any $j\not\in\*F\0_n$, 
\begin{equation}\label{eq:gamma:mean}
\EE Q_{j,n} 
  = \EE X_{j,n} \biggl[1+O(\eps_n)
        +O\biggl({\sigma_{j,n}^2+m_{j,n}^2\over m_{j,n}\alpha_n}\biggr)\biggr],
\end{equation}
\item for any $j\in\*F\0_n$, 
\begin{equation}\label{eq:gamma:mean:sat}
\EE Q_{j,n} 
  = \EE X_{j,n} \Bigl[1+O(\eps_n)\Bigr],
\end{equation}

\end{enumerate}
\end{thm}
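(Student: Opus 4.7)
The plan is to follow the template of Theorem~\ref{thm:stab}: each quantity is expressed via Lemma~\ref{lem:represent} as a ratio of probabilities involving sums of independent variables, and both numerator and denominator are then evaluated using the local limit estimate of Proposition~\ref{pro:lt:pole}.

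For part~(1), I would use~(\ref{eq:pf2}) directly. Applying Proposition~\ref{pro:lt:pole} at $x=0$ yields $\alpha_n\PP(S_n=m_n) = \xi_n^{\xi_n-1}e^{-\xi_n}/\Gamma(\xi_n)\,[1+O(\eps_n)]$, so dividing gives~(\ref{eq:gamma:prob}).

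For parts~(2) and~(3), I would work from~(\ref{eq:marg}) and~(\ref{eq:mean}) respectively. The auxiliary sum $T_n = S_n - \sum_{k=1}^\ell X_{k,n}$ in part~(2) (resp.\ $S_n - X_{j,n} + \wt X_{j,n}$ in part~(3)) removes or modifies only queues outside $\*F\0_n$, and thus still satisfies Assumption~\ref{ass:pole} with the \emph{same} exponent $\xi_n$; its parameters $\hat\sigma_n$, $\hat\gamma_n$, $\hat\delta_n$, $\hat\m{2+r}_n$ differ from those of $S_n$ only by bounded non-saturated contributions and remain asymptotically equivalent. Applying Proposition~\ref{pro:lt:pole} to the modified sum at the argument corresponding to the appropriate mean shift, and taking the ratio with $\PP(S_n=m_n)$, the first-order expansion of the gamma density
\[
\frac{(\xi_n+x/\alpha_n)^{\xi_n-1}e^{-\xi_n+x/\alpha_n}}
     {\xi_n^{\xi_n-1}e^{-\xi_n}}
=1+O(x/\alpha_n)
\]
produces the announced error terms: $O((\sum_{k=1}^\ell m_{k,n}-q_k)/\alpha_n)$ in~(\ref{eq:gamma:marg}) from the shift $x=\sum_{k=1}^\ell(m_{k,n}-q_k)$, and $O((\sigma_{j,n}^2+m_{j,n}^2)/(m_{j,n}\alpha_n))$ in~(\ref{eq:gamma:mean}), where the latter reflects both the mean shift $\EE(\wt X_{j,n}-X_{j,n})=\sigma_{j,n}^2/m_{j,n}$ and the typical size of $\wt X_{j,n}$, namely $(\sigma_{j,n}^2+m_{j,n}^2)/m_{j,n}$, which controls the next-order correction.

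Part~(4), with $j\in\*F\0_n$, is the main obstacle. Here I would analyze the characteristic function of $\wt X_{j,n}$ directly from Assumption~\ref{ass:pole}: size-biasing amounts to one extra derivative on the algebraic singularity, so $\wt X_{j,n}$ contributes a factor $\omega_n^{\xi_{j,n}+1}$ in place of $\omega_n^{\xi_{j,n}}$, raising the total algebraic order of the modified sum $S_n-X_{j,n}+\wt X_{j,n}$ to $\xi_n+1$. Consequently it satisfies a local limit theorem of the same form as~(\ref{eq:pro:lt:pole}) but with $\xi_n+1$ in place of $\xi_n$. Its mean exceeds $m_n$ by $\sigma^2_{j,n}/m_{j,n}$, which for a saturated queue equals $\alpha_n$ to leading order, so the shifted gamma density is evaluated at $y=(\xi_n+1)-1=\xi_n$; the algebraic identity $y^{\xi_n}/\Gamma(\xi_n+1)=y^{\xi_n-1}/\Gamma(\xi_n)$ at $y=\xi_n$ then provides the \emph{exact} cancellation that yields~(\ref{eq:gamma:mean:sat}) without any additional error term. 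The delicate part is to verify this cancellation rigorously, tracking the subleading corrections coming from both the mean shift and the change of gamma parameter and checking that they combine to just $1+O(\eps_n)$.
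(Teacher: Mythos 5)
Your proposal follows the same route as the paper, which itself disposes of parts (1)--(3) by noting they run ``along the same lines as for Theorem~\ref{thm:stab}'' (now with Proposition~\ref{pro:lt:pole} in place of Proposition~\ref{pro:clt:norm}) and of part (4) by invoking Equation~(\ref{eq:mean}). Your elaboration of part (4) --- observing that size-biasing raises the algebraic order from $\xi_n$ to $\xi_n+1$ while the mean shift $\sigma_{j,n}^2/m_{j,n}\sim\alpha_n$ moves the argument back by one, so the identity $\xi_n^{\xi_n}/\Gamma(\xi_n+1)=\xi_n^{\xi_n-1}/\Gamma(\xi_n)$ gives the exact leading-order cancellation --- is a correct and more explicit account of the step the paper leaves to the reader.
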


\begin{proof}{}
The proof of \romi-\romiii follows essentially along the same lines as
for Theorem~\ref{thm:stab}, while \romiv depends on
Equation~(\ref{eq:mean}) of Lemma~\ref{lem:represent}.
\end{proof}

\section{Scaling}\label{sec:scal}

As said in the introduction, this section provides guidelines for
using the above technical results in two ways.
\begin{itemize}
\item {\em Quantitative\/} estimates for the error terms (w.r.t.\ some
limiting distribution), explicitly obtained from the original data
(e.g.\ the total number of customers $m_n$).
\item {\em Qualitative\/} understanding of the ``critical'' values for
$m_n$ which, in some sense, induce phase transitions of interest.
\end{itemize}

The queues are partitioned as follows:
\begin{eqnarray*}
\*F_n &\egaldef& 
  \Bigl\{0\leq k\leq n\ :\ \liminf_{q\to\infty}
   \sqrt[q]{\mu_{k,n}(1)\cdots\mu_{k,n}(q)}<\infty\Bigr\},\\
\*I_n &\egaldef& 
  \Bigl\{0\leq k\leq n\ :\ \liminf_{q\to\infty}
   \sqrt[q]{\mu_{k,n}(1)\cdots\mu_{k,n}(q)}=\infty\Bigr\}.
\end{eqnarray*}

 From the general discussion at the beginning of
Section~\ref{sec:model}, $\*F_n$ is never empty. Let also
\[
\mu_{k,n} 
  \;\egaldef\; \cases{
    \displaystyle\liminf_{q\to\infty}\sqrt[q]{\mu_{k,n}(1)\cdots\mu_{k,n}(q)}, 
                  & if $k\in\*F_n$,\cr
    \mu_{k,n}(1), & if $k\in\*I_n$,}
\]

\[ 
\rho_{k,n} 
  \egaldef {\lambda_n\pi_{k,n}\over\mu_{k,n}},\qquad
\lambda\0_n
  \egaldef \min_{k\in\*F_n}{\mu_{k,n}\over\pi_{k,n}},\qquad
\rho\0_n 
  \egaldef \max_{k\in\*F_n} \rho_{k,n}={\lambda_n\over\lambda\0_n}.
\]

We shall also need the following subset of $\*F_n$:
\[
 \*F\0_n \egaldef \{ k\in\*F_n\ :\ \rho_{k,n}=\rho\0_n\}.
\]

Note that the definitions of $\mu_{k,n}$ and $\rho\0_n$ are consistent
with the discussion which lead to~(\ref{eq:defrhon}). Moreover, in
most practical cases, $\mu_{k,n}(q)\to\mu_{k,n}$ as $q\to\infty$,
provided that this limit exists and is finite. 

To avoid uninteresting technicalities, it will be convenient to
introduce Assumptions~\ref{ass:uan} and~\ref{ass:service}, but it
should be pointed out that the results of Section~\ref{sec:llt} are
valid in a more general setting. Simple conditions ensuring
\ref{ass:pole} and \ref{ass:service} are discussed in
Section~\ref{sec:ass}.

\begin{ass}\label{ass:uan}
The following limit holds:
\[
 \lim_{n\to\infty} 
  \max_{1\leq k\leq n}{{\pi_{k,n}\over\mu_{k,n}}
   \over {\pi_{1,n}\over\mu_{1,n}}+\cdots+{\pi_{n,n}\over\mu_{n,n}}}=0.
\]
\end{ass}

Assumption~\ref{ass:uan} is somehow unavoidable to obtain a meaningful
asymptotic behaviour of the network. It says that it is possible to
let $m_n\to\infty$ as $n\to\infty$, without saturating the network
and, under the forthcoming Assumption~\ref{ass:service}, it amounts to
Lyapounov's condition~(\ref{eq:lyap}). Note that, when
$\mu_{k,n}=\Omega(1)$ uniformly in $k$ and $n$, \ref{ass:uan} is
simply equivalent to
\[
 \lim_{n\to\infty}\max_{1\leq k\leq n}\pi_{k,n}=0.
\]

\begin{ass}\label{ass:service}
\begin{enumerate}
\item For any real $A<1$ and any integer $r\leq4$, and for any $k\in\*F_n$
such that $\rho_{k,n}\leq A$,
\begin{equation}\label{eq:service:omega}
  m_{k,n}=\Omega(\rho_{k,n}),\ \ \m{r}_{k,n}=\Omega(\rho_{k,n}),
  \ \ \gamma^2_{k,n}=\Omega(\rho_{k,n})
\end{equation}
uniformly in $k$ and $n$.
\item (\ref{eq:service:omega}) also holds for all $k\in\*I_n$.
\end{enumerate}
\end{ass}

The derivation of the most general results of the section is done in
Lemma \ref{lem:m0n} and Theorem~\ref{thm:m0n}. Further insight, under
some additional assumptions, is presented in Theorems~\ref{thm:scal}
and~\ref{thm:xiinf}.

\begin{defi}
A sequence $m\0_n$ is said to be {\em weakly critical\/} for
$\*C_n$ if, for any $0<t<1$,
\begin{equation}\label{eq:defg}
 g(t)\egaldef\limsup_{n\to\infty} {m_n(t\lambda\0_n)\over m\0_n}
\end{equation}
exists and $\displaystyle\lim_{t\to1-}g(t)$ be either $1$ or
$\infty$.

If, in addition, the relation
\[
 \lim_{t\to1-}\liminf_{n\to\infty} {m_n(t\lambda\0_n)\over m\0_n}
 =\lim_{t\to1-}\limsup_{n\to\infty} {m_n(t\lambda\0_n)\over m\0_n},
\]
holds, then the sequence is said to be {\em strongly critical\/} for
$\*C_n$.
\end{defi}

Before seeing how such critical sequences can be used, the next lemma
proves their existence. 

\begin{lem}\label{lem:m0n}
Under assumption~\ref{ass:service}, a convenient weakly critical
sequence for $\*C_n$ is, for some fixed $0<u<1$,
\begin{equation}\label{eq:m0nu}
  m\0_n(u)\egaldef h_um_n(u\lambda\0_n), 
\end{equation}
where $h_u$ is correctly chosen.
\end{lem}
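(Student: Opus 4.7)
The plan is to reduce weak criticality to a monotonicity argument on the auxiliary quantity
\[
 \wt g(t)\egaldef\limsup_{n\to\infty}{m_n(t\lambda\0_n)\over m_n(u\lambda\0_n)},
\]
so that $g=\wt g/h_u$, and then to choose $h_u$ according to whether $\wt g$ admits a finite limit at $1^-$. I would carry this out in three steps.

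First I would show $\wt g(t)<\infty$ for every $t\in(0,1)$. By definition of $\lambda\0_n$, for any $s\in(0,1)$ and every $k\in\*F_n$ the load $\rho_{k,n}(s\lambda\0_n)=s\lambda\0_n\pi_{k,n}/\mu_{k,n}$ is bounded by $s$. Fix $A\in(\max(t,u),1)$; Assumption~\ref{ass:service} provides constants $c_1(A),c_2(A)>0$ such that
\[
 c_1(A)\,\rho_{k,n}(\lambda)\leq m_{k,n}(\lambda)\leq c_2(A)\,\rho_{k,n}(\lambda),
\]
uniformly in $k$ and $n$, both for $k\in\*F_n$ with $\rho_{k,n}(\lambda)\leq A$ and (by part~2 of the assumption) for $k\in\*I_n$. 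Since $\rho_{k,n}(t\lambda\0_n)/\rho_{k,n}(u\lambda\0_n)=t/u$ holds exactly for every $k$, summing over $k$ gives
\[
 {m_n(t\lambda\0_n)\over m_n(u\lambda\0_n)}\;\leq\;{c_2(A)\over c_1(A)}\cdot{t\over u},
\]
independently of $n$, and finiteness of $\wt g(t)$ follows.

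Second, Equation~(\ref{eq:derm}) shows that $\lambda\mapsto m_n(\lambda)$ is strictly increasing for every $n$, so the ratio inside the $\limsup$ is non-decreasing in $t$; this property passes to $\limsup_n$, hence $\wt g$ is non-decreasing on $(0,1)$. Combined with $\wt g(u)=1$ this ensures
\[
 L\egaldef\lim_{t\to 1^-}\wt g(t)
\]
exists in $[1,\infty]$. Finally, I would choose $h_u$ accordingly: set $h_u=L$ when $L<\infty$, so that $g(t)=\wt g(t)/L$ tends to $1$; and set $h_u=1$ (say) when $L=\infty$, so that $g(t)=\wt g(t)$ tends to $\infty$. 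In either case $g$ is finite on $(0,1)$ and $\lim_{t\to 1^-}g(t)\in\{1,\infty\}$, which is precisely weak criticality.

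The one potentially delicate point, which one should not try to sharpen, is that the constants $c_1(A),c_2(A)$ in Assumption~\ref{ass:service} are allowed to depend on $A$ and the ratio $c_2(A)/c_1(A)$ typically blows up as $A\to 1$ (for an M/M/1 queue, $m(\rho)/\rho=1/(1-\rho)$, so $c_2(A)=(1-A)^{-1}$). This is not a defect: it is exactly what permits $L$ to be either finite or infinite according to the global structure of $\*C_n$, and the dichotomy in the definition of a weakly critical sequence is tailored to absorb both cases, which correspond respectively to the non-saturated and saturated asymptotic regimes of the network.
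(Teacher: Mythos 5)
Your proof is correct and follows essentially the same route as the paper's: bound the ratio $m_n(t\lambda\0_n)/m_n(u\lambda\0_n)$ via Assumption~\ref{ass:service}, use the monotonicity coming from~(\ref{eq:derm}) to obtain that $\hat g_u(t)$ is increasing in $t$ so the limit at $1^-$ exists in $[1,\infty]$, and then take $h_u$ equal to that limit if finite and $1$ otherwise. The only difference is that you spell out the boundedness step explicitly with the constants $c_1(A),c_2(A)$, which the paper compresses into a single $\Omega$ display.
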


\begin{proof}{}
Choose $(t,u)\in]0,1[\times]0,1[$. From~\ref{ass:service},
\[
 m_n(t\lambda\0_n)
   =\Omega(m_n(u\lambda\0_n))
   =\Omega\biggl(\sum_{k=1}^n{t\lambda\0_n\pi_{k,n}\over\mu_{k,n}}\biggr),
\]
and the application $t\mapsto m_n(t\lambda\0_n)/m_n(u\lambda\0_n)$
is increasing and locally bounded. Therefore,
\[
 \hat g_u(t)\egaldef \limsup_{n\to\infty} {m_n(t\lambda\0_n)
                                 \over m_n(u\lambda\0_n)}
\]
exists and is increasing. To conclude the proof, take
\[
 h_u=\cases{\displaystyle\lim_{t\to1-}\hat g_u(t), & if the limit is finite,\cr
             1, & otherwise.}
\]

It is interesting to note that, if the above limit is finite for some
$u$, it is finite for all $u\in]0,1[$. The proof of the lemma is
concluded.
\end{proof}

In fact, as shown in Theorem~\ref{thm:m0n}, any critical sequence
$m\0_n$ acts as a threshold parameter for $m_n$.  Under~\ref{ass:uan}
and~\ref{ass:service}, which are satisfied by a wide variety of
networks, we provide a nearly complete classification in terms of {\em
necessary and sufficient\/} scaling.  It is worth to emphasize that
any $m\0_n$ chosen from~(\ref{eq:m0nu}) has a {\em pseudo-explicit\/}
form, given in terms of the data of the original network.

\medskip
The second step is to enumerate in a consistent way the desirable
properties of the distribution of $Q_{1,n},\ldots,Q_{n,n}$: for some
finite $j$ and some unspecified $\eps_n$, such that $\eps_n\to0$ as
$n\to\infty$, we have
\begin{eqnarray}
\EE Q_{j,n}
  &=& \EE X_{j,n}\Bigl[1+O(\eps_n)\Bigr], \label{eq:scal:mean}\\[0.5ex]
\PP(Q_{1,n}=q_1,\ldots,Q_{j,n}=q_j)
  &=&\prod_{k=1}^j\PP(X_{k,n}=q_k)
 \Bigl[1+O(\eps_n)\Bigr], \label{eq:scal:marg}
\end{eqnarray}
and also, when Theorem~\ref{thm:stab} [resp. Theorem~\ref{thm:pole}] holds,
the following equation (\ref{eq:scal:normal}) [resp. (\ref{eq:scal:gamma})]:

\begin{eqnarray}
\lefteqn{\PP(Q_{1,n}=q_1,\ldots,Q_{n,n}=q_n)}\qqqq\nonumber\\
  &=&\sqrt{2\pi}\sigma_n\prod_{k=1}^n\PP(X_{k,n}=q_k)
        \Bigl[1+O(\eps_n)\Bigr], \label{eq:scal:normal}\\[0.5ex]
\lefteqn{\PP(Q_{1,n}=q_1,\ldots,Q_{n,n}=q_n)}\qqqq\nonumber\\
  &=&{\alpha_n\Gamma(\xi_n)\over\xi_n^{\xi_n-1}e^{-\xi_n}}
      \prod_{k=1}^n\PP(X_{k,n}=q_k)\Bigl[1+O(\eps_n)\Bigr].
                                   \label{eq:scal:gamma}
\end{eqnarray}

\begin{thm}\label{thm:m0n}
Let~\ref{ass:uan} and~\ref{ass:service} hold and $m\0_n$ be a weakly
critical sequence for $\*C_n$, with the associated function $g(t)$.

Assume first that $\lim_{t\to1-}g(t)=1$. Then the following
classification holds:
\begin{enumerate}
\item If 
\[  
  \limsup_{n\to\infty} {m_n\over m\0_n}<1,
\]
then (\ref{eq:scal:mean}), (\ref{eq:scal:marg}) and
(\ref{eq:scal:normal}) hold with $\eps_n=1/m_n$. In particular $\EE
Q_{k,n}$ is bounded, uniformly in $k$ and $n$.
\item If 
\[  
  \limsup_{n\to\infty} {m_n\over m\0_n}>1,
\]
then, for any sequence of queues $k_n$ in $\*F\0_n$, we have
$\displaystyle\limsup_{n\to\infty}\EE Q_{k_n,n}=\infty$.
\item If $m\0_n$ is a strongly critical sequence and
\[  
  \liminf_{n\to\infty} {m_n\over m\0_n}>1,
\]
then, for any sequence of queues $k_n$ in $\*F\0_n$, we have
$\displaystyle\lim_{n\to\infty}\EE Q_{k_n,n}=\infty$.
\end{enumerate}

In the situation $\lim_{t\to1-}g(t)=\infty$, the same results hold,
just replacing {\rm ``$<1$''} (resp.\ {\rm ``$>1$''}) in the r.h.s.\
of the inequalities by {\rm ``$<\infty$''} (resp.\ {\rm
``$=\infty$''}).
\end{thm}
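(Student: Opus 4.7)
The plan is to translate the hypothesis on $m_n/m\0_n$ into a quantitative control of $\rho\0_n = \lambda_n/\lambda\0_n$, using the strict monotonicity of $\lambda \mapsto m_n(\lambda)$ established by~(\ref{eq:derm}), and then feed the outcome either into Theorem~\ref{thm:stab} in the non-saturated regime, or into a direct blow-up argument for $\EE X_{k_n,n}$ transferred to $\EE Q_{k_n,n}$ through Lemma~\ref{lem:represent} in the saturated regime.

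For part \romi, I would pick $\alpha$ with $\limsup m_n/m\0_n < \alpha < 1$ and, using $\lim_{t \to 1-} g(t) = 1$, a value $t_\star \in (0,1)$ with $g(t_\star) > \alpha$. The driving inequality $m_n(t_\star\lambda\0_n)/m\0_n > \alpha \geq m_n/m\0_n$ then forces, by monotonicity, $\lambda_n < t_\star\lambda\0_n$, hence $\rho\0_n < t_\star$. The only delicate point is that $g(t_\star)$ is a $\limsup$ and not a pointwise lower bound; I would resolve this by specializing to the critical sequence $m\0_n = h_u m_n(u\lambda\0_n)$ of Lemma~\ref{lem:m0n}, which gives the uniform lower bound $m_n(t\lambda\0_n)/m\0_n \geq 1/h_u$ for $t \geq u$, and then invoke the commensurability between weakly critical sequences noted in the proof of that lemma. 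Once $\rho\0_n \leq t_\star < 1$ is secured, Assumptions~\ref{ass:uan} and~\ref{ass:service} immediately yield $\sigma_n^2, \gamma_n^2, \m{4}_n = \Omega(\sum_k \rho_{k,n}) = \Omega(m_n)$, whence $\m{4}_n/\sigma_n^4 = O(1/m_n)$; Theorem~\ref{thm:stab} applied with $r = 2$ then delivers (\ref{eq:scal:mean}), (\ref{eq:scal:marg}) and~(\ref{eq:scal:normal}) with $\eps_n = 1/m_n$.

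For parts \romii and \romiii, I would extract a subsequence $n_j$ with $m_{n_j}/m\0_{n_j} \to c > 1$ and proceed by contradiction: if $\rho\0_{n_j} \leq t$ for some $t < 1$ along a further subsequence, then $m_{n_j} \leq m_{n_j}(t\lambda\0_{n_j})$ by monotonicity, giving $c \leq g(t)$; letting $t \to 1-$ yields $c \leq 1$, contradicting $c > 1$. Hence $\rho\0_{n_j} \to 1$, which means that for any $k_{n_j} \in \*F\0_{n_j}$ the point $\pi_{k_{n_j},n_j}\lambda_{n_j}$ tends to the finite radius of convergence $1/\mu_{k_{n_j},n_j}$ of $f_{k_{n_j},n_j}$, and the ergodicity of the corresponding queue in $\*O_n$ forces $\EE X_{k_{n_j},n_j} = \pi_{k_{n_j},n_j}\lambda_{n_j}\,f'_{k_{n_j},n_j}/f_{k_{n_j},n_j} \to \infty$. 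Equation~(\ref{eq:mean}) then transfers the blow-up to $\EE Q_{k_{n_j},n_j}$, provided that the probability ratio $\PP(S_n - X_{k_n} + \wt X_{k_n} = m_n)/\PP(S_n = m_n)$ stays $\Omega(1)$.

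Establishing this last uniform lower bound without invoking Assumption~\ref{ass:pole} is the main obstacle I anticipate; I would address it by a local limit theorem for $S_n$ restricted to the bulk of unsaturated queues, essentially in the spirit of Proposition~\ref{pro:clt:norm} applied to the ``open network minus one saturated queue''. Part~\romiii is then identical, with strong criticality upgrading the subsequential statement to a full limit via the $\liminf = \limsup$ identity. The companion case $\lim_{t \to 1-} g(t) = \infty$ follows by exactly the same scheme, with the thresholds ``$1$'' replaced by ``$\infty$''.
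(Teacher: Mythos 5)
Your proof follows essentially the same route as the paper's: use the monotonicity of $\lambda\mapsto m_n(\lambda)$ from~(\ref{eq:derm}) and the definition of $g(t)$ to convert the hypothesis on $m_n/m\0_n$ into a bound on $\rho\0_n$, then feed this bound into Theorem~\ref{thm:stab} for case~\romi, and argue $\rho\0_n\to1$ forces $m_{k_n,n}=\EE X_{k_n,n}\to\infty$ for cases~\romii--\romiii. The differences are worth noting, in both directions.

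On case~\romi, you rightly worry that $g(t_\star)$ being a $\limsup$ does not yield a pointwise lower bound on $m_n(t_\star\lambda\0_n)/m\0_n$. However, your proposed fix --- restricting to the particular critical sequence $h_u m_n(u\lambda\0_n)$ of Lemma~\ref{lem:m0n} and invoking ``commensurability of weakly critical sequences'' --- is not supported by what Lemma~\ref{lem:m0n} actually provides (its remark only concerns different choices of $u$ within that one construction), and it narrows the theorem's hypothesis, which quantifies over \emph{any} weakly critical sequence. The paper avoids the pointwise issue by arguing contrapositively: if $\rho\0_n$ is not bounded away from $1$, then $\limsup_n\rho\0_n=1$ and, using $m_n=m_n(\rho\0_n\lambda\0_n)$ and monotonicity, $\limsup_n m_n/m\0_n\geq1$, contradicting the hypothesis of~\romi. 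You should adopt this cleaner direction of argument rather than specialize $m\0_n$.

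On cases~\romii--\romiii, you have correctly spotted a real subtlety that the paper's own proof leaves implicit. The paper's one-line justification --- ``the result follows from the monotonicity of the function $t\mapsto m_{k_n,n}(t\lambda\0_n)$'' --- directly gives only $\EE X_{k_n,n}\to\infty$, while the theorem asserts $\EE Q_{k_n,n}\to\infty$. Equation~(\ref{eq:mean}) of Lemma~\ref{lem:represent} bridges the two provided the ratio $\PP(S_n-X_{k_n}+\widetilde X_{k_n}=m_n)/\PP(S_n=m_n)$ does not vanish, and you are right that this requires a separate argument; neither Assumption~\ref{ass:pole} nor Theorem~\ref{thm:pole} is available under the hypotheses of Theorem~\ref{thm:m0n}. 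Your proposed route (a local limit estimate for $S_n$ with the saturated queue removed, in the spirit of Proposition~\ref{pro:clt:norm}) is a plausible way to supply the missing lower bound, but it is only sketched; as it stands your proof, like the paper's, leaves this step unjustified. So the attempt is structurally sound and is actually more candid than the original about where the work lies, but it neither closes the identified gap in~\romii--\romiii\ nor handles the $\limsup$ issue in~\romi\ in a way compatible with the theorem's stated generality.
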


\begin{proof}{}
To prove~\romi, note that
$m_n=m_n(\lambda_n)=m_n(\rho\0_n\lambda\0_n)$. Since
$m_n(t\lambda\0_n)$ is increasing in $t$, this implies that, when
$\limsup_{n\to\infty}\rho\0_n=1$, we have also
$\limsup_{n\to\infty}m_n/m\0_n\geq1$.  Therefore, in case~\romi, there
exists $\tau<1$ such that $\rho\0_n\leq\tau$ for any
$n\in\NN$. Using~\ref{ass:service}, we can estimate all error terms
coming in Theorem~\ref{thm:stab} and the result is proved. 

Similarly in case~\romii [resp.\ \romiii], we have necessarily
$\limsup_{n\to\infty}\rho\0_n=1$ [resp.\
$\lim_{n\to\infty}\rho\0_n=1$], and the result follows from the
monotonicity of the function $t\mapsto m_{k_n,n}(t\lambda\0_n)$.

The case $\lim_{t\to1-}g(t)=\infty$ is handled with the same method.
\end{proof}

Direct applications of Theorem~\ref{thm:m0n} are proposed farther on
in sections~\ref{ssec:MalYak} and~\ref{ssec:tight}.

In order to get finer results, the next assumption ensures that the
queues not belonging to $\*F\0_n$ stay uniformly away from saturation
conditions.

\begin{ass}\label{ass:nonsat}
There exists a constant $A<1$ such that, 
\begin{equation}
 \lambda\0_n{\pi_{k,n}\over\mu_{k,n}}
    \leq A, \mbox{ \ \ for all }k\in\*F_n\setminus\*F\0_n,\label{eq:scal:A}\\
\end{equation}
\end{ass}

In order to properly reformulate the results of Section~\ref{sec:llt},
let us define
\begin{eqnarray}
 \hat m_n(\lambda)
  &\egaldef& \sum_{k\not\in\*F\0_n}m_{k,n}(\lambda),\\
\hat m\0_n
  &\egaldef& \hat m_n(\lambda\0_n).\label{eq:defm0n}
\end{eqnarray}

Using~(\ref{eq:derm}), it is not difficult to see that $\hat m\0_n$
defined~(\ref{eq:defm0n}) is a strongly critical sequence for $\*C_n$
under \ref{ass:pole}, \ref{ass:uan}, \ref{ass:service} and
\ref{ass:nonsat}. Therefore, all results of Theorem~\ref{thm:m0n}
hold, as well as the following:

\begin{thm}\label{thm:scal}
Let \ref{ass:pole}, \ref{ass:uan}, \ref{ass:service} and
\ref{ass:nonsat} hold. If $\xi_n$ is uniformly bounded, then the
following results hold:
\begin{enumerate}
\item If there exists $\theta_n>0$, such that, for all $n\in\NN$,
\[
{m_n\over \hat m\0_n}\leq1-\theta_n,
\]
and $\lim_{n\to\infty}\theta_n^2m_n=\infty$, then
(\ref{eq:scal:mean}), (\ref{eq:scal:marg}) and (\ref{eq:scal:normal})
hold with
\[
 \eps_n\egaldef{1\over m_n}+{1\over m_n^2\theta_n^4},
\]
except when a queue in $\*F\0_n$ is concerned, in which case
(\ref{eq:scal:mean}) and (\ref{eq:scal:marg}) hold with
\[
 \eps_n={1\over\theta_n^2m_n}.
\]

\item If there exists $\theta_n>0$, such that, for all $n\in\NN$,
\[
{m_n\over \hat m\0_n}\geq1+\theta_n,
\]
and
$\lim_{n\to\infty}\theta_n\hat m\0_n=\lim_{n\to\infty}\theta_n^2\hat m\0_n=\infty$,
then (\ref{eq:scal:mean}) and (\ref{eq:scal:gamma}) hold with
\[
 \eps_n\egaldef{1\over \theta_n^2\hat m\0_n}+{1\over \theta_n\hat m\0_n}
        +{1\over\sqrt{\hat m\0_n}}\biggl[{1\over \hat m\0_n\theta_n^2}\biggr]^{\xi_n-1\over2}.
\]

Moreover, if in Equation~(\ref{eq:scal:marg}),
$[1,j]\cap\*F\0_n=\emptyset$, then the latter also holds,
with $\eps_n$ having the above value.
\end{enumerate}
\end{thm}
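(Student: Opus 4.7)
The plan is to reduce Theorem~\ref{thm:scal} to the local limit results proved earlier: Theorem~\ref{thm:stab} for Case~\romi and Theorem~\ref{thm:pole} for Case~\romii. The common preliminary step is to translate the scaling hypothesis on $m_n/\hat m\0_n$ into a quantitative bound on $\rho\0_n$, equivalently on $\alpha_n$. I would write $m_n(\lambda)=\hat m_n(\lambda)+\sum_{k\in\*F\0_n}m_{k,n}(\lambda)$: the first summand is controlled via~(\ref{eq:derm}) and Assumption~\ref{ass:service}, which combined with~\ref{ass:nonsat} shows that $\hat m_n(\lambda)$ is of order $\hat m\0_n$ in a left neighborhood of $\lambda\0_n$; the second summand is handled via Assumption~\ref{ass:pole}, which gives $m_{k,n}(\lambda)\sim\xi_{k,n}\alpha_n(\lambda)$ as $\lambda\to\lambda\0_n-$. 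These two inputs yield the implications $m_n\le(1-\theta_n)\hat m\0_n\Rightarrow 1-\rho\0_n=\Omega(\theta_n)$ and $m_n\ge(1+\theta_n)\hat m\0_n\Rightarrow\alpha_n=\Omega(\theta_n\hat m\0_n)$, which are the quantitative drivers of the error bounds below.

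For Case~\romi, I apply Theorem~\ref{thm:stab} with $r=2$. Under~\ref{ass:nonsat} and~\ref{ass:service}, queues in $\*F_n\setminus\*F\0_n$ contribute $\Omega(\hat m\0_n)$ to both $\sigma_n^2$ and $\gamma_n^2$ and $O(\hat m\0_n)$ to $\m4_n$; for queues in $\*F\0_n$, the negative-binomial-type form of $\phi_{k,n}$ in Assumption~\ref{ass:pole}, together with $1-\rho\0_n=\Omega(\theta_n)$, yields $\sigma_{k,n}^2=O(\theta_n^{-2})$ and $\m4_{k,n}=O(\theta_n^{-4})$. Since $\xi_n$ is bounded and $m_n\theta_n^2\to\infty$, summing gives $\sigma_n^2=\Omega(m_n)$ and $\m4_n/\sigma_n^4=O(1/m_n+1/(m_n^2\theta_n^4))$, which is precisely the announced $\eps_n$; then~(\ref{eq:scal:normal}), (\ref{eq:scal:marg}) and~(\ref{eq:scal:mean}) follow from the three parts of Theorem~\ref{thm:stab}. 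When $j\in\*F\0_n$ is concerned, the additional terms $\sigma_{j,n}^2/\sigma_n^2$ and $\m{2+r}_{j,n}/(m_{j,n}\sigma_n^{1+r})$ in $\eps_{1,n},\eps_{2,n}$ dominate and yield the looser bound $\eps_n=1/(\theta_n^2 m_n)$.

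For Case~\romii, I apply Theorem~\ref{thm:pole} with $r=1$. Only queues outside $\*F\0_n$ enter the hatted quantities, so~\ref{ass:nonsat} and~\ref{ass:service} give $\hat\sigma_n^2=\Omega(\hat m\0_n)$, $\hat\m3_n=O(\hat m\0_n)$ and $\hat\gamma_n=\Omega(\hat\sigma_n)$. Combined with $\alpha_n=\Omega(\theta_n\hat m\0_n)$ from the preliminary step, $\hat\sigma_n/\alpha_n=O(1/(\theta_n\sqrt{\hat m\0_n}))\to 0$ whenever $\theta_n^2\hat m\0_n\to\infty$, and $\hat\delta_n\hat\gamma_n\to\infty$ for the same reason; the technical prerequisites of Theorem~\ref{thm:pole} are thus met. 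Substituting these estimates into its error bound produces the claimed $\eps_n$ term by term, the last summand arising from $(\hat\m{2+r}_n/\hat\sigma_n^{2+r})(\hat\sigma_n/\alpha_n)^{\xi_n-1}$. The main obstacle throughout is the preliminary quantitative translation: one needs uniform-in-$n$ control on the behaviour of $m_n(\lambda)$ near $\lambda\0_n$, separating the regular contribution $\hat m_n(\lambda)$ from the singular contribution $\xi_n\alpha_n(\lambda)$. Assumption~\ref{ass:pole} (via its control on $\psi_{k,n}'$), together with~\ref{ass:service} and~\ref{ass:nonsat}, is precisely what makes this separation rigorous; once it is in place, the rest is direct substitution into the pre-established expansions.
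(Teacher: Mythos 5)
Your proposal is correct and follows essentially the same route as the paper: for Case~(i) it reduces to Theorem~\ref{thm:stab} and for Case~(ii) to Theorem~\ref{thm:pole}, with the key preliminary step being the translation of the hypothesis $m_n/\hat m\0_n\lessgtr1\mp\theta_n$ into the quantitative bounds $1-\rho\0_n=\Omega(\theta_n)$ and $\alpha_n=\Omega(\theta_n\hat m\0_n)$ via~(\ref{eq:derm}), Taylor's formula, and Assumptions~\ref{ass:pole}, \ref{ass:service}, \ref{ass:nonsat}. You actually spell out more of the moment estimates ($\sigma_n^2=\Omega(m_n)$, $\m4_n/\sigma_n^4=O(1/m_n+1/(m_n^2\theta_n^4))$, and similarly for the hatted quantities) than the paper, which compresses this into ``a direct but tedious computation''; the underlying argument is the same.
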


\begin{proof}{}
To prove \romi, note that when $m_n\leq (1-\theta_n)\hat m\0_n$, 
\[
  \hat m_n(\lambda_n)\leq m_n(\lambda_n)\leq (1-\theta_n)\hat m_n(\lambda\0_n).
\]

Moreover, using~\ref{ass:service}, \ref{ass:nonsat} and
(\ref{eq:derm}), Taylor's formula yields, for some
$\lambda\in]\lambda_n,\lambda\0_n[$,
\begin{eqnarray*}
 \hat m\0_n-\hat m_n(\lambda_n)
 &=& \hat m_n(\lambda\0_n)-\hat m_n(\lambda_n)\\
 &=& (\lambda\0_n-\lambda_n){\hat\sigma_n^2(\lambda)\over\lambda}\\
 &=& (\lambda\0_n-\lambda_n)\Omega\biggl(\sum_{k\in\*F_n\setminus\*F\0_n}
                                            {\pi_{k,n}\over\mu_{k,n}}\biggr),
\end{eqnarray*}
which implies 
\[
 1-{\hat m_n(\lambda_n)\over \hat m\0_n}=\Omega(1-\rho\0_n)\geq\theta_n.
\]

Hence, 
\[
 {1\over1-\rho\0_n}=O\biggl({1\over\theta_n}\biggr)
\]
and, using $\hat m_n(\lambda_n)=\Omega(\hat
\m{r}_n)=\Omega(\hat\sigma_n^2)$, a direct but tedious computation
shows that Theorem~\ref{thm:stab} applies with appropriate error
terms.

Let us now prove assertion \romii. It follows from
\[
 m_n-\hat m_n(\lambda_n)=\Omega\Bigl({\xi_n\rho\0_n\over1-\rho\0_n}\Bigr)
 \geq \theta_n\hat m\0_n\to\infty,
\]
that $\rho\0_n\to1$ and
\begin{eqnarray*}
 {\hat\sigma_n^2\over\alpha_n^2}
 &=& \Omega(\hat m_n(\lambda_n)(1-\rho\0_n)^2)\\
 &=& O\Bigl({\hat m_n(\lambda_n)\over (m_n-\hat m_n(\lambda_n))^2}\Bigr)
 \;=\; O\Bigl({\hat m_n(\lambda_n)\over \theta_n^2[\hat m\0_n]^2}\Bigr)
 \;=\; O\Bigl({1\over \theta_n^2\hat m\0_n}\Bigr).
\end{eqnarray*}

Thus, Theorem~\ref{thm:pole} applies and \romii is proved.
\end{proof}

It remains to state what happens when $\xi_n\to\infty$ as
$n\to\infty$. As shown below, this behaviour does not depend on the
saturation of the queues in $\*F\0_n$.

\begin{thm}\label{thm:xiinf}
Let $\xi_n\to\infty$ as $n\to\infty$. Let also \ref{ass:pole},
\ref{ass:uan}, \ref{ass:service} and \ref{ass:nonsat} hold.  Then,
under the uniformity assumption
\begin{equation}\label{eq:xiinf:m4}
 \m4_{k,n}=O\left({\xi_{k,n}\rho\0_n\over(1-\rho\0_n)^4}\right), 
  \mbox{ for all }k\in\*F\0_n,
\end{equation}
the results (\ref{eq:scal:mean}), (\ref{eq:scal:marg}) and
(\ref{eq:scal:normal}) are again valid, with
\[
 \eps_n\egaldef{1\over (1-\rho\0_n)m_n}.
\]
\end{thm}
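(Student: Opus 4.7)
The strategy is to reduce Theorem~\ref{thm:xiinf} to Theorem~\ref{thm:stab} applied with $r=2$, exploiting the observation that although individual queues in $\*F\0_n$ become highly variable as $\rho\0_n$ approaches one, the accumulation of $\xi_n\to\infty$ such ``gamma-like'' contributions restores Lyapounov's condition for $S_n$. The gamma limit of Theorem~\ref{thm:scal} is thus replaced by a normal limit, and the scale of the error is dictated by the collective variance and kurtosis carried by the $\*F\0_n$ block.

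The first step is to extract per-queue moment estimates. Reading \ref{ass:pole} as identifying $X_{k,n}$ (for $k\in\*F\0_n$) with the sum of a negative binomial $\mathrm{NB}(\xi_{k,n},\rho\0_n)$ variable and an auxiliary variable whose characteristic function $\psi_{k,n}$ contributes only $O(1)$ corrections (since $\psi'_{k,n}(\theta)=O(1)$ and $\xi_{k,n}<\xi_{\max}$), one obtains $\sigma_{k,n}^2=\Omega(\xi_{k,n}\rho\0_n/(1-\rho\0_n)^2)$; the hypothesis~(\ref{eq:xiinf:m4}) is precisely the correct order of the fourth central moment of $\mathrm{NB}(\xi_{k,n},\rho\0_n)$ when $\xi_{k,n}$ stays bounded. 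For $k\not\in\*F\0_n$, Assumption~\ref{ass:service} combined with \ref{ass:nonsat} gives $\sigma_{k,n}^2,\m4_{k,n}=\Omega(\rho_{k,n})$, whence, summing,
\[
  \sigma_n^2 = \Omega\Bigl(\xi_n\rho\0_n/(1-\rho\0_n)^2 + \hat m_n\Bigr),\qquad
  \m4_n = O\Bigl(\xi_n\rho\0_n/(1-\rho\0_n)^4 + \hat m_n\Bigr).
\]

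Together with the identity $(1-\rho\0_n)m_n \asymp \xi_n\rho\0_n+(1-\rho\0_n)\hat m_n$, the next step is to verify by direct comparison that $\m4_n/\sigma_n^4 = O(1/[(1-\rho\0_n)m_n])$, which simultaneously gives Lyapounov's condition and the announced scale $\eps_n=1/[(1-\rho\0_n)m_n]$. The verification splits between the saturated-dominant regime (where $\xi_n\rho\0_n$ controls) and the non-saturated-dominant regime (where $(1-\rho\0_n)\hat m_n$ controls), and shows that each contribution to $\m4_n$ is absorbed by the matching lower bound on $\sigma_n^4$. The remaining hypothesis $\sigma_n=O(\gamma_n)$ of Theorem~\ref{thm:stab} is checked along the same lines through Lemma~\ref{lem:majcar}. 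Applying Theorem~\ref{thm:stab} then yields~(\ref{eq:scal:mean}), (\ref{eq:scal:marg}) and~(\ref{eq:scal:normal}), provided that the auxiliary terms in $\eps_{1,n},\eps_{2,n}$ there (namely $\sigma_{j,n}^2/\sigma_n^2$, the terms involving the $q_j$'s, and $\bar\m3_n\sigma_{j,n}/\sigma_n^4$) are also bounded by the same~$\eps_n$.

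The main obstacle is precisely this uniform accounting: verifying that every auxiliary term in the error expressions of Theorem~\ref{thm:stab} fits under the single scale $1/[(1-\rho\0_n)m_n]$, uniformly across the two regimes. The algebra is routine but delicate, because the saturation factor $1-\rho\0_n$ enters to different powers in the different summands, and one must track carefully whether the saturating block $\*F\0_n$ or the non-saturating block $\*F_n\setminus\*F\0_n$ controls each individual term; here both the condition $\xi_n\to\infty$ and Assumption~\ref{ass:uan} play a role in ensuring that the worst case of each split tends to zero.
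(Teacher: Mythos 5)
Your overall strategy matches the paper's: reduce to Theorem~\ref{thm:stab} with $r=2$, use \ref{ass:pole} to get $\m{s}_{k,n}=\Omega\bigl(\xi_{k,n}\rho\0_n/(1-\rho\0_n)^s\bigr)$ for $k\in\*F\0_n$, and then verify that $\m4_n/\sigma_n^4 = O\bigl(1/[(1-\rho\0_n)m_n]\bigr)$ by the split between the $\*F\0_n$-dominant and $\hat m_n$-dominant regimes. Up to that point you are on the right track.

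However, there is a genuine gap where you dismiss the condition $\sigma_n=O(\gamma_n)$ as being ``checked along the same lines through Lemma~\ref{lem:majcar}.'' It is \emph{not}: that condition actually fails here. By Lemma~\ref{lem:majcar}, each $\gamma_{k,n}^2\leq 1/4$, so the contribution of $\*F\0_n$ to $\gamma_n^2$ is at most $|\*F\0_n|/4$, while its contribution to $\sigma_n^2$ is of order $\xi_n\alpha_n^2\to\infty$. Hence $\gamma_n^2=\Omega(\hat m_n)$ only captures the non-saturated block, and $\sigma_n^2/\gamma_n^2$ blows up whenever $\rho\0_n\to1$. The paper must therefore abandon the generic tail bound~(\ref{eq:majphin}) and replace it, for the saturated queues, by the explicit decay
\[
 |\phi_{k,n}(\theta)| \;\leq\; \left[\frac{1}{1+\alpha_n^2\theta^2/6}\right]^{\xi_{k,n}/4},
 \qquad k\in\*F\0_n,
\]
which yields a tail contribution to the Fourier inversion integral of order $O\bigl((1+\alpha_n^2\delta_n^2)^{1-\xi_n/4}/(\delta_n\alpha_n^2\xi_n)\bigr)$; this is exponentially small in $\xi_n$ because $\delta_n\alpha_n=\Omega(1)$. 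Without this replacement of the tail estimate, Proposition~\ref{pro:clt:norm} simply does not apply, and your proof does not close. This is precisely the point where the hypothesis $\xi_n\to\infty$ does real work, and it needs an explicit argument rather than a reference back to Lemma~\ref{lem:majcar}.
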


\begin{proof}{}
The statement relies on Theorem~\ref{thm:stab}, taking $r=2$. First,
from classical weak compactness and moment convergence theorems (see
e.g.~\cite{Loe:1}), it follows that, for $k\in\*F\0_n$ and all
$0<s\leq4$
\[
 \m{s}_{k,n}=\Omega\left({\xi_{k,n}\rho\0_n\over(1-\rho\0_n)^s}\right).
\]

Thus, the term coming in Lyapounov's condition~(\ref{eq:lyap}) is
equal to
\begin{eqnarray*}
{\m4_n\over\sigma^4_n}
  &=& \Omega\left({\hat m_n+{\rho\0_n\xi_n\over(1-\rho\0_n)^4}
               \over \left[\hat m_n
                           +{\rho\0_n\xi_n\over(1-\rho\0_n)^2}
                     \right]^2}\right)\\
  &=& \Omega\left({(1-\rho\0_n)^4\hat m_n+\rho\0_n\xi_n
                  \over\left[(1-\rho\0_n)^2\hat m_n+\rho\0_n\xi_n
                      \right]^2}\right)\\
  &=& O\left({1\over (1-\rho\0_n)\hat m_n+\rho\0_n\xi_n}\right)\\
  &=& O\left({1\over (1-\rho\0_n)m_n}\right),
\end{eqnarray*}
which tends to $0$ as $n\to\infty$. The other error terms
given in Theorem~\ref{thm:stab} are estimated in the same way.

The only thing left to check is that $\sigma_n^2=O(\gamma_n^2)$. In
fact, since $\gamma_n^2=\Omega(\hat m_n)$, this relation will only
hold when $\rho\0_n$ is uniformly bounded away from~$1$. However, for
any $k\in\*F\0_n$ and for any $\theta\in[-\pi,\pi]$,
\begin{eqnarray*}
 |\phi_{k,n}(\theta)|
  &=& |\omega_{k,n}(\theta)|^{\xi_{k,n}}\Bigl|1+O(\theta)\Bigr|\\
  &\leq& \left[{1\over1+{\alpha_n^2\theta^2\over6}}\right]^{\xi_{k,n}\over2}
           \Bigl|1+O(\theta)\Bigr|\\
  &\leq& \left[{1\over1+{\alpha_n^2\theta^2\over6}}\right]^{\xi_{k,n}\over4},
\end{eqnarray*}
provided that $a<\rho\0_n<1$, where $a$ is some fixed constant. This
bound can be used to replace Equation~(\ref{eq:majphin}) in the proof
of Proposition~\ref{pro:clt:norm} by
\begin{eqnarray*}
 \left|\int_{\delta_n\leq |\theta|\leq\pi}e^{-\ci\theta x}
       \phi_n(\theta)d\theta\right|
  &\leq& \int_{|\theta|\geq\delta_n}
    \left[{1\over1+{\alpha_n^2\theta^2\over6}}\right]^{\xi_n\over4}d\theta\\
  &=& O\biggl({1\over\delta_n\alpha_n^2\xi_n}
            {1\over(1+\alpha_n^2\delta_n^2)^{{\xi_n\over4}-1}}\biggr),
\end{eqnarray*}
which is exponentially small in $\xi_n$, since $\delta_n\alpha_n=\Omega(1)$.
\end{proof}

\section{Towards more tangible assumptions}\label{sec:ass}

The assumptions used in the results of the previous section may seem
difficult to check in practice. However, as shown hereafter,
they can be replaced (at the expense of a loss in generality) by
simpler properties directly related to the service mechanisms of the
queues.

The next lemma provides a realistic context in which~\ref{ass:service}
is satisfied.

\begin{lem}\label{lem:ass:service}
Assume that 
\begin{enumerate}
\item there exist sequences $R(q)$ and $T(q)$ such that 
\[
  \liminf_{q\to\infty} \sqrt[q]{R(1)\cdots R(q)}=1,
\]
\[
  \liminf_{q\to\infty}T(q)=\infty,
\]
and, for any $q>0$,
\begin{eqnarray*}
  \mu_{k,n}(q) &\geq& R(q)\mu_{k,n}, \mbox{\quad for }k\in\*F_n,\\
  \mu_{k,n}(q) &\geq& T(q)\mu_{k,n}, \mbox{\quad for }k\in\*I_n;
\end{eqnarray*}
\item there exists a constant $B<\infty$ such that
\[
 \lambda\0_n{\pi_{k,n}\over\mu_{k,n}}<B,\mbox{ for all }k\in\*I_n.
\]
\end{enumerate}
Then~\ref{ass:service} holds.
\end{lem}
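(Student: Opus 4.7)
The plan is to verify that each of $m_{k,n}$, $\m{r}_{k,n}$ for $r\leq 4$, and $\gamma^2_{k,n}$ is simultaneously bounded above and below by constant multiples of $\rho_{k,n}$, uniformly in $k$ and $n$. The whole argument rests on controlling the generating function $f_{k,n}(\lambda_n\pi_{k,n})$ and its derivatives by a fixed majorant series built from $R$ or $T$.

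I would first treat the case $k\in\*F_n$ with $\rho_{k,n}\leq A<1$. The hypothesis $\mu_{k,n}(q)\geq R(q)\mu_{k,n}$ gives, term by term,
\[
 \frac{(\lambda_n\pi_{k,n})^q}{\mu_{k,n}(1)\cdots\mu_{k,n}(q)}
   \leq \frac{\rho_{k,n}^q}{R(1)\cdots R(q)}.
\]
Since $\liminf_q\sqrt[q]{R(1)\cdots R(q)}=1$, the auxiliary series $F(z)\egaldef\sum_{q\geq 0}z^q/(R(1)\cdots R(q))$ has radius of convergence $1$, and $F$ together with each of its derivatives is continuous, hence bounded by a constant depending only on $A$, on the closed interval $[0,A]$. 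This immediately yields $1\leq f_{k,n}(\lambda_n\pi_{k,n})\leq F(A)$ and the upper bounds $\EE X_{k,n}^r=O(\rho_{k,n})$, which translate into $m_{k,n}=O(\rho_{k,n})$, $\m{r}_{k,n}=O(\rho_{k,n})$ and $\gamma^2_{k,n}=O(\rho_{k,n})$ after routine manipulations on central moments.

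For the matching lower bounds, I would retain only the $q=1$ term,
\[
 \EE X_{k,n}^r\geq \frac{\lambda_n\pi_{k,n}/\mu_{k,n}(1)}{f_{k,n}(\lambda_n\pi_{k,n})}
   \geq \frac{\rho_{k,n}}{F(A)}\cdot\frac{\mu_{k,n}}{\mu_{k,n}(1)},
\]
and argue that $\mu_{k,n}(1)/\mu_{k,n}$ is uniformly bounded, using the $q=1$ instance of the hypothesis together with the fact that $\mu_{k,n}$ is the limiting geometric mean of $\mu_{k,n}(1),\ldots,\mu_{k,n}(q)$. The central moments $\m{r}_{k,n}$ and the truncated variance $\gamma^2_{k,n}$ are then recovered from $\EE X_{k,n}^r$ by observing that $m_{k,n}$ is itself $O(\rho_{k,n})$, so that $|X_{k,n}-m_{k,n}|^r$ and $X_{k,n}^r$ agree up to terms of the same order in $\rho_{k,n}$.

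The case $k\in\*I_n$ is easier: condition (ii) forces $\rho_{k,n}<B$ uniformly, while $\liminf T(q)=\infty$ makes the majorant series $\sum_q B^q/(T(1)\cdots T(q))$ and all its derivatives converge to quantities that depend neither on $k$ nor on $n$, so both the upper and the lower bounds follow by the same route. The main obstacle in both cases is \emph{uniformity}: one must check that every constant entering the bounds depends only on $A$, $B$, $R$ and $T$, and never on the individual queue $k$ or on $n$. This is exactly what the global nature of $R$ and $T$ in hypothesis~(i) is designed to provide.
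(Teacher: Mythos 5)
Your argument follows the same route as the paper's proof: majorize each weighted moment series $\sum_{q}q^r(\lambda_n\pi_{k,n})^q/(\mu_{k,n}(1)\cdots\mu_{k,n}(q))$ by the fixed convergent series $\sum_q q^rA^q/(R(1)\cdots R(q))$ (respectively $\sum_q q^rB^q/(T(1)\cdots T(q))$ for $k\in\*I_n$), deduce that $f_{k,n}(\lambda_n\pi_{k,n})$ is bounded between $1$ and a constant depending only on $A$ (resp.\ $B$), and factor out the $q=1$ term to obtain the leading factor $\lambda_n\pi_{k,n}/\mu_{k,n}(1)$.

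The one step that does not go through as you wrote it is the final passage from $\lambda_n\pi_{k,n}/\mu_{k,n}(1)$ to $\rho_{k,n}=\lambda_n\pi_{k,n}/\mu_{k,n}$ for queues $k\in\*F_n$. You correctly observe that one needs $\mu_{k,n}(1)/\mu_{k,n}$ uniformly bounded, i.e.\ $\mu_{k,n}(1)=O(\mu_{k,n})$. But hypothesis~(i) at $q=1$ gives the \emph{opposite} inequality $\mu_{k,n}(1)\geq R(1)\mu_{k,n}$, and the definition $\mu_{k,n}=\liminf_{q}(\mu_{k,n}(1)\cdots\mu_{k,n}(q))^{1/q}$ is insensitive to the single factor $\mu_{k,n}(1)$, since $\mu_{k,n}(1)^{1/q}\to1$ for any finite value; so the geometric-mean observation cannot produce the upper bound you need, and that sentence of your proof is a genuine gap. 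To be fair, the paper's own proof stops at $m_{k,n},\m{r}_{k,n}=\Omega(\lambda_n\pi_{k,n}/\mu_{k,n}(1))$ and tacitly identifies this with $\Omega(\rho_{k,n})$; for $k\in\*I_n$ the identification holds by definition of $\mu_{k,n}$, but for $k\in\*F_n$ it requires the extra (unstated) property $\mu_{k,n}(1)=O(\mu_{k,n})$, which holds automatically when $\mu_{k,n}(q)$ is nondecreasing in $q$ (the $M/M/c$ case in the Remark, with $c$ uniformly bounded), but does not follow from the stated hypotheses. Apart from this point, your reduction of the central-moment and $\gamma^2_{k,n}$ bounds to the raw moments $\EE X_{k,n}^r$ is more compressed than is ideal (the lower bound on $\gamma^2_{k,n}$ really needs the explicit term $p_0p_1/(p_0+p_1)\geq p_0p_1$ rather than a moment inequality), but it is in the same spirit as the paper's ``similarly''.
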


\noindent{\bf Remark\ } This lemma can  be applied in particular to any
mixing of $M/M/\infty$ and multiple-server queues with at most $c$
servers, with
\[
 R(q)=\min\left[1,{q\over c}\right],\ \ T(q)=q.
\]

\begin{proof}{}
For each queue $k\in\*F_n$ such that $\rho_{k,n}\leq A$, and for all
$r\in\NN$, we have
\[
 \sum_{q=0}^\infty q^r{(\lambda_n\pi_{k,n})^q
   \over\mu_{k,n}(1)\cdots\mu_{k,n}(q)}
 \leq \sum_{q=0}^\infty {q^rA^q\over R(1)\cdots R(q)}<\infty.
\]

In particular, $f_{k,n}(\lambda_n\pi_{k,n})=\Omega(1)$ and 
\[
m_{k,n}
  = {\lambda_n\pi_{k,n}\over\mu_{k,n}(1)f_{k,n}(\lambda_n\pi_{k,n})}
      \sum_{q=1}^\infty q {(\lambda_n\pi_{k,n})^{q-1}
                       \over\mu_{k,n}(2)\cdots\mu_{k,n}(q)} 
  = \Omega\biggl({\lambda_n\pi_{k,n}\over\mu_{k,n}(1)}\biggr).
\]

Similarly, for any $r\in\NN$, 
\[
 \m{r}_{k,n}=\Omega\biggl({\lambda_n\pi_{k,n}\over\mu_{k,n}(1)}\biggr).
\]

The same computations can be applied to $k\in\*I_n$, thus
proving~\ref{ass:service}-\romii.
\end{proof}

The results of Section~\ref{sec:scal} can be easily generalized to a
situation where some $M/M/\infty$ queues of $\*I_n$ become saturated,
in which case~\ref{ass:service}-\romii is no longer satisfied.
Indeed, the characteristic function of the number of clients $X$ in an
$M/M/\infty$ queue with parameter $\rho$ can be written as
\[
 \EE e^{\ci\theta X} 
   = \exp\Bigl(\rho(e^{\ci\theta}-1)\Bigr)
   = \biggl[\exp\Bigl({\rho\over\lfloor\rho\rfloor}
                      (e^{\ci\theta}-1)\Bigr)\biggr]^{\lfloor\rho\rfloor},
\]
which means that a saturated infinite server queue can be replaced by
several non-saturated infinite-server queues without changing the
distribution of $S_n$. Therefore, the results of
Section~\ref{sec:scal} still hold, except for marginal distributions
containing one of the saturated queues.

\medskip
Theorems~\ref{thm:scal} and~\ref{thm:xiinf} also required
assumption~\ref{ass:pole} on the service mechanisms of the so-called
``saturable'' queues. It is often enough to restrict ourselves to the
following two categories of queues, which encompass the standard
$M/M/c$ queue.

\begin{lem}\label{lem:ass:pole}
Assume that, for any $k\in\*F\0_n$, either
\begin{itemize}
\item[\romi] there is a constant $q_c$, independent of $k$ and
$n$, such that
\begin{equation}\label{eq:lap:mmc}
 {\mu_{k,n}(q)\over\mu_{k,n}}=
   \cases{O(1), & if $q<q_c$,\cr
          1,    & otherwise.}
\end{equation}
\end{itemize}
or
\begin{itemize}
\item[\romii] for some finite constants $\xi_{\min}$ and $\xi_{\max}$, 
\begin{equation}\label{eq:lap:xi}
 \log{\mu_{k,n}(q)\over\mu_{k,n}}=-{\xi_{k,n}-1\over q}+\Delta_{k,n}(q),
\end{equation}
with 
\[
  \Delta_{k,n}(q)=O\left({1\over q^2}\right),\qquad
 1<\xi_{\min}\leq\xi_{k,n}\leq\xi_{\max}\,,
\]
uniformly in $k$ and $n$. (See also Section \ref{sec:concl}).
\end{itemize}

Then \ref{ass:pole} holds. 
\end{lem}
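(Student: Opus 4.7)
The plan is, in each case, to write the characteristic function
$\phi_{k,n}(\theta) = e^{-\ci m_{k,n}\theta}\, f_{k,n}(\pi_{k,n}\lambda_n e^{\ci\theta})/f_{k,n}(\pi_{k,n}\lambda_n)$
and to pin down the singularity of $f_{k,n}(z)$ at $z=\mu_{k,n}$ precisely enough to extract an explicit factor $\omega_n^{\xi_{k,n}}(\theta)$. For $k\in\*F\0_n$ we have $\pi_{k,n}\lambda_n/\mu_{k,n}=\rho\0_n$, so substituting $z=\pi_{k,n}\lambda_n e^{\ci\theta}$ the elementary factor $(1-\rho\0_n)/(1-\rho\0_n e^{\ci\theta})=\omega_n(\theta)$ comes out automatically; what remains is $\psi_{k,n}$, whose $C^1$-bound, uniform in $k$ and $n$, is the content of Assumption~\ref{ass:pole}.

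In case \romi, the central observation is that the tail of $f_{k,n}$ is \emph{geometric}: since $\mu_{k,n}(q)=\mu_{k,n}$ for $q\geq q_c$, I would split
\[
 f_{k,n}(z) = P_{k,n}(z) + \frac{A_{k,n}\,z^{q_c}}{1-z/\mu_{k,n}},
\]
with $P_{k,n}$ a polynomial of degree less than $q_c$ and $A_{k,n}>0$, both controlled uniformly by~(\ref{eq:lap:mmc}). This produces a simple pole, hence $\xi_{k,n}=1$. After substitution and factorization of $\omega_n(\theta)$, the residual $\psi_{k,n}(\theta)$ is a rational function in $e^{\ci\theta}$ of bounded degree; checking $\psi'_{k,n}=O(1)$ reduces to comparing its numerator with its denominator, both of which turn out to be of order $\Omega(\rho\0_n{}^{q_c})$.

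In case \romii, the plan is to transfer~(\ref{eq:lap:xi}) into coefficient asymptotics for $f_{k,n}$. Summing~(\ref{eq:lap:xi}) and using $\sum_{i\leq q} 1/i = \log q + \gamma + O(1/q)$, together with the \emph{absolute} convergence of $\sum_i\Delta_{k,n}(i)$ granted by $|\Delta_{k,n}(i)|\leq C/i^2$, I would obtain
\[
 \mu_{k,n}(1)\cdots\mu_{k,n}(q) = \mu_{k,n}^q\,q^{-(\xi_{k,n}-1)}\,c_{k,n}\bigl(1+O(1/q)\bigr),
\]
with $c_{k,n}=\Omega(1)$ uniformly. Comparing term by term with the reference series $\sum_q q^{\xi-1}x^q \sim \Gamma(\xi)/(1-x)^\xi$ (classical singularity analysis), one deduces
\[
 f_{k,n}(z) = \frac{\Phi_{k,n}(z)}{(1-z/\mu_{k,n})^{\xi_{k,n}}},
\]
where $\Phi_{k,n}$ is analytic in a disk of radius strictly greater than $\mu_{k,n}$, with $\Phi_{k,n}(\mu_{k,n})=\Omega(1)$. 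Setting $\psi_{k,n}(\theta) = \Phi_{k,n}(\pi_{k,n}\lambda_n e^{\ci\theta})/\Phi_{k,n}(\pi_{k,n}\lambda_n)$ delivers the factorization with $\xi_{k,n}\in[\xi_{\min},\xi_{\max}]$.

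The main obstacle is the uniformity in $k,n$ of the derivative bound on $\psi_{k,n}$, and in case~\romii of the smoothness of $\Phi_{k,n}$ in a \emph{fixed} complex neighbourhood of $\mu_{k,n}$. Case~\romi is comparatively easy since everything reduces to finitely many coefficients controlled by~(\ref{eq:lap:mmc}); in case~\romii, however, one must propagate the uniformity of the $O(1/q^2)$ remainder through the singularity-analysis step, i.e.\ show that the tail $\sum_{i\geq q}\Delta_{k,n}(i)$ and the correction $c_{k,n}$ admit bounds independent of $k,n$, so that $\Phi'_{k,n}$ is bounded on a disk whose radius does not shrink with $k,n$. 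This is precisely where the uniform form of the hypothesis~(\ref{eq:lap:xi}) is used.
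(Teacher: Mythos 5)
Your case~\romi is essentially the paper's argument: isolate the simple pole coming from the geometric tail, obtain a rational $\psi_{k,n}$, and observe that the finitely many remaining coefficients are controlled uniformly by~(\ref{eq:lap:mmc}). Nothing to add there.

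For case~\romii there is a real gap. You pass from the coefficient asymptotics
\[
 \prod_{p=1}^q{\mu_{k,n}\over\mu_{k,n}(p)}
   = K_{k,n}\,q^{\xi_{k,n}-1}\bigl(1+O(1/q)\bigr)
\]
to the conclusion that $\Phi_{k,n}(z) := (1-z/\mu_{k,n})^{\xi_{k,n}}f_{k,n}(z)$ is analytic in a disk of radius strictly larger than $\mu_{k,n}$, and then bound $\Phi'_{k,n}$ there. That conclusion is false in general. The error term $O(1/q)$ in the coefficients only gives a series of coefficients of size $O(q^{\xi-2}/\mu_{k,n}^q)$, and that series still has radius of convergence exactly $\mu_{k,n}$ and is, for $\xi_{k,n}>1$, itself singular at $z=\mu_{k,n}$. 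So $\Phi_{k,n}$ is not analytic past the circle; what you actually get is boundary behaviour of the form $\Phi_{k,n}(z)=K_{k,n}\Gamma(\xi_{k,n})\bigl(1+O(1-z/\mu_{k,n})\bigr)$ as $z\to\mu_{k,n}$, which is weaker. And indeed this is the part of the argument you flag as the ``main obstacle''; but the way you propose to resolve it (propagating analyticity to a fixed larger disk) cannot succeed.

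The paper sidesteps this entirely. It isolates the singular part exactly via the polylogarithm $\varphi(z,1-\xi)=\sum_{q\geq1}q^{\xi-1}z^q$, then uses the Hankel-contour representation to obtain the \emph{explicit} identity $\varphi(z,1-\xi)=\Gamma(\xi)\sum_{n\in\ZZ}(-\log z+2\ci n\pi)^{\xi-1}$, valid for all $|z|<1$. This gives closed control over the singular part for $z=\rho e^{\ci\theta}$ on the whole circle $\theta\in[-\pi,\pi]$ even as $\rho\0_n\to1$, which is exactly what the uniform bound $\psi'_{k,n}(\theta)=O(1)$ demands; the remainder $\sum_{q}q^{\xi-2}a_qz^q$ is then treated as a subordinate term whose contribution, after multiplication by $(1-z)^\xi$, stays bounded together with its derivative. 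Your singularity-analysis sketch can be repaired along these same lines — the point is that you must control $\Phi_{k,n}$ and $\Phi'_{k,n}$ on the circle $|z|=\rho_{k,n}\mu_{k,n}$ as $\rho_{k,n}\uparrow1$, not on a disk of radius $>\mu_{k,n}$; the latter simply does not exist.
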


\begin{proof}{}
In view of Equation~(\ref{eq:defphikn}), for any fixed $k$
and $n$, the quantity to estimate is related to
\begin{eqnarray*}
  f_{k,n}(\lambda_n\pi_{k,n}e^{\ci\theta})
  &=& \sum_{q=0}^\infty {(\lambda_n\pi_{k,n}e^{\ci\theta})^q
                        \over\mu_{k,n}(1)\cdots\mu_{k,n}(q)}\\
  &=& \sum_{q=0}^\infty \prod_{p=1}^q{\mu_{k,n}\over\mu_{k,n}(p)}
                (\rho_{k,n}e^{\ci\theta})^q.
\end{eqnarray*}

For the sake of brevity, let us omit the $k$ and $n$ subscripts and
define, for any $z\in\CC$, $|z|<1$,
\[
 g(z)\egaldef \sum_{q=0}^\infty \prod_{p=1}^q{\mu\over\mu(p)}z^q.
\]

Thus, we have to estimate $g(\rho e^{\ci\theta})/g(\rho)$, for
$\theta\in[-\pi,\pi]$ and $\rho<1$. This proof proceeds in steps:

\begin{itemize}
\item[a)] Assume first that~(\ref{eq:lap:mmc}) holds. Then
\[
  g(z)={1\over 1-z}\left[
         O(1-z)\sum_{q=0}^{q_c}\prod_{p=1}^q{\mu\over\mu(p)} z^q
         +z^{q_c+1}\prod_{p=1}^{q_c}{\mu\over\mu(p)}\right],
\]
and Assumption~\ref{ass:pole} holds with $\xi=1$.

\item[b)] 
Under (\ref{eq:lap:xi}), one obtains, for $q\geq1$, 
\begin{eqnarray*}
 \prod_{p=1}^q{\mu\over\mu(p)}
   &=& \exp\left[(\xi-1)\sum_{p=1}^q{1\over p}-\sum_{p=1}^q\Delta(p)\right]\\
   &=& \exp[(\xi-1)C-\Delta]\cdot q^{\xi-1}[1+{\textstyle{a_q\over q}}],
\end{eqnarray*}
where $C$ is the Euler constant,
$\Delta\egaldef\sum_{p=1}^\infty\Delta(p)$, and $a_q$ is uniformly
bounded. In the remainder of the proof, let 
\[
 K\egaldef\exp[(\xi-1)C-\Delta].
\]

\item[c)] Let, for $|z|<1$ and $s\in\CC$,  
\[ \varphi(z,s)\egaldef\sum_{q=1}^\infty {z^q\over q^s}.\]

Then, for $\Re(s)>0$, 
\[ \varphi(z,s)={z\over\Gamma(s)}\int_0^\infty {t^{s-1}dt\over e^t-z}. \]

In fact, this integral representation can be used to get an analytic
continuation with respect to $s$, by introducing the (classical)
Hankel's contour. This yields, for all $|z|<1$ and $\Re(s)>0$,
\[
 \varphi(z,s)
   ={\ci\Gamma(1-s)\over2\pi}\int_{\*L} {(-t)^{s-1}dt\over e^t-z}.
\]

\newcommand\ZZstar{\ZZ\setminus\{0\}}

Distorting $\*L$ to include the zeros of $e^t-z$, the following
expression holds, for $\Re(s)<0$ and all values of $z$ such that
$|\arg(-\log z+2\ci n\pi)|\leq\pi$:
\[
 \varphi(z,s)=\Gamma(1-s)\sum_{n\in\ZZ}(-\log z+2\ci n\pi)^{s-1}.
\]

\item[d)] Using this expression, simple computations yield, when
$\xi>1$ and $|z|<1$
\begin{eqnarray*}
g(z)
  &=& 1+K\left[\varphi(z,1-\xi)+\sum_{q=1}^q q^{\xi-2}a_q z^q\right]\\
 &=& {K\over\log^\xi z}\Biggl[
       {\log^\xi z\over K}+
        1+\sum_{n\neq0}\left({\log z\over
              \log z-2\ci n\pi}\right)^\xi\\
 & &\hspace{3.2em} +\log^\xi z\sum_{q=1}^q q^{\xi-2}a_q z^q\Biggr],
\end{eqnarray*}
and, finally,
\[
 {g(\rho e^{\ci\theta})\over g(\rho)}
   =\left[{1-\rho\over1-\rho e^{\ci\theta}}\right]^\xi
         \left[1+\xi\rho(e^{\ci\theta}-1)\right].
\]
\end{itemize}
This concludes the proof of the lemma.
\end{proof}

\section{Applications}\label{sec:appl}

\subsection{A Jackson network with convergence properties}
\label{ssec:MalYak}

Consider the basic Jackson network (consisting of $M/M/1$ queues with
constant service rates) analyzed in~\cite{MalYak:1}.

In this case, 
\[
  m_n(t\lambda\0_n)
  \;=\; \sum_{k=1}^n{tr_{k,n}\over1-tr_{k,n}},
   \ \ \mbox{ with }r_{k,n}={\lambda\0_n\pi_{k,n}\over\mu_{k,n}}.
\]

Under the assumption made in~\cite{MalYak:1} that the counting measure
\[
 I_n(A)\egaldef{1\over n}\,\mbox{\rm Card}(k : r_{k,n}\in A),
\]
defined for all Borel sets $A$, converges weakly to a probability
measure $I$, we have
\[
 \lim_{n\to\infty}{m_n(t\lambda\0_n)\over n}
   =\int_0^1{tr\over1-tr}dI(r),
\]
and
\[
 \lim_{t\to1-}\int_0^1{tr\over1-tr}dI(r)\egaldef\lambda_{cr}\leq\infty.
\]

Thus, the results of~\cite{MalYak:1} are contained in the theorems of
Section~\ref{sec:scal}, taking $m\0_n=n\lambda_{cr}$, which is then a
{\em strongly critical sequence\/} for $\*C_n$.

\subsection{A network with tight bottlenecks}\label{ssec:tight}

As pointed out in the introduction, there are cases of interest with
$m_n=o(n)$. This will be illustrated in the next example.

Consider a closed network consisting of $s_n$ subnetworks of $M/M/1$
queues having each a unique entry point, in which a fixed number $m$
of tasks circulate. The queues are subject to failures, taking place
with some probability $f<1$. When a failure occurs, the task returns to
the entry point of its current subnetwork. Tasks visit the various
subnetworks according to some probability matrix.

This model exhibits tight bottlenecks, when the number and the size of
the subnetworks grow. This fact, for the sake of simplicity, will be
illustrated on a very simple topology, presented in
Figure~\ref{fig:tandem}: all subnetworks are associated in tandem, and
each of them consists itself of $\ell_n$ queues in tandem, with unit
processing rates.

\begin{figure}
\center
\begin{picture}(125,20)
\def\dashline(#1,#2)#3{
	\multiput(0,0)(#1,#2){#3}{\line(#1,#2){0.5}}}
\def\error{
	\put(0,0){\vector(0,1){3}}
	\put(0,3){\line(0,1){1}}
	\put(-2,2){\makebox(0,0){$\scriptstyle f$}}}
\def\aqueue{
	\put(0,0){\circle*{2}}
	\put(0,1){\error}}
\def\queuelink{
	\put(0,0){\vector(1,0){6}}
	\put(7,0){\aqueue}}
\def\queues{
	\put(0,0){\queuelink}
	\put(8,0){\queuelink}
	\put(16,0){\dashline(1,0){6}}
	\put(23,0){\aqueue}
	\put(24,0){\queuelink}}
\def\subnet#1{
	\put(0,0){\dashbox{1}(35,10)[bl]{
		\put(0,3){\queues}
		\put(9,8){\line(-1,0){6}}
		\put(15,8){\vector(-1,0){6}}
		\put(15,8){\dashline(1,0){8}}
		\put(31,8){\vector(-1,0){5}}
		\put(26,8){\line(-1,0){3}}
		\put(3,8){\vector(0,-1){3}}
		\put(3,5){\line(0,-1){2}}
		\put(32,3){\line(1,0){3}}}}
	\put(17.5,13){\makebox(0,0){subnet #1}}}

\put(0,3){\line(1,0){2}}
\put(2,0){\subnet{1}}
\put(37,3){\line(1,0){5}}
\put(42,0){\subnet{2}}
\put(77,3){\dashline(1,0){11}}
\put(88,0){\subnet{$s_n$}}
\put(123,3){\line(1,0){2}}
\put(125,3){\vector(0,1){9}}	\put(125,12){\line(0,1){7}}
\put(125,19){\vector(-1,0){19}}	\put(107,19){\line(-1,0){19}}
\put(77,19){\dashline(1,0){11}}
\put(77,19){\vector(-1,0){38.5}}\put(38.5,19){\line(-1,0){38.5}}
\put(0,19){\vector(0,-1){9}}	\put(0,10){\line(0,-1){7}}
\end{picture}
\caption{a compound network of tandem queues}\label{fig:tandem}
\end{figure}

Here, the invariant measure of the routing matrix has the form
\[
  (\pi_{1,n},\ldots,\pi_{{\ell_n},n};\pi_{1,n},\ldots;\ldots,\pi_{{\ell_n},n}),
\]
where $\pi_{k,n}$ is the invariant probability associated to the $k$-th
queue of an arbitrary subnetwork.  A straightforward
computation, using symmetry properties, yields, for any $t\in]0,1[$,
\begin{eqnarray*}
 \pi_{k,n}&=&{1\over s_n}{f(1-f)^{k-1}\over 1-(1-f)^{\ell_n}}
          \;=\; (1-f)^{k-1}\pi_{1,n},\\
 m_n(t\lambda\0_n)
   &=& s_n\sum_{k=1}^{\ell_n}{t(1-f)^{k-1}
                             \over1-t(1-f)^{k-1}}.
\end{eqnarray*}

Choosing some fixed $u\in]0,1[$ and assuming that
$\ell_n\to\infty$ as $n\to\infty$, we have
\[
\lim_{n\to\infty} {m_n(t\lambda\0_n)\over m_n(u\lambda\0_n)}
  = {L_f(t)\over L_f(u)},
\]
where $L_f$ is defined on $]0,1[$ as
\[
 L_f(t)
   \egaldef \sum_{k=1}^\infty{t(1-f)^{k-1}\over1-t(1-f)^{k-1}}
\]
and $\lim_{t\to1-}L_f(t)=\infty$. 

Therefore, $m_n(u\lambda\0_n)$ is a strongly critical sequence for the
network and the size of the queues remain uniformly bounded if, and
only if,
\[
 m_n=O\left(m_n(u\lambda\0_n)\right)=O(s_n)=o(n).
\]

\subsection{A service vehicle network}\label{ssec:vehic}

Consider a fleet of vehicles serving an area consisting of $n$
stations forming a fully connected graph.  These vehicles are used to
transport goods or passengers. Vehicles wait at stations until they
receive a request, in which case they go to an other station. The
routing among stations is done according to some routing matrix
$P_n$. When a request arrives to an empty station, it is immediately
lost. The request arrivals form a Poisson stream at each queue.

We model this system as follows: for all $0\leq k\leq n$, station $k$
is represented as a single-server queue with service rate $\mu_{k,n}$
which is equal to the arrival rate at station $k$, since arrivals are
lost when the station is empty. When a vehicle leaves station $k$, it
chooses its destination according to the Markovian routing matrix
$P_n=(p_{k\ell,n})$.  The duration of the journey between two stations
$k$ and $\ell$ is represented by an infinite server queue placed on
the edge between them.  The service rate of this queue when there are
$q$ vehicles traveling between $k$ and $\ell$ is
$q\mu_{k\ell,n}$. Note that, contrary to the convention used
throughout this \chapart, the total number of {\em queues\/} is
$n^2+n$. Let $(\pi_{1,n},\ldots,\pi_{n,n})$ be the invariant measure
of $P_n$, defined as in~(\ref{eq:invmeas}). Then, with obvious
notation, for all $k,\ell\in[1,n]$, for all $\theta\in[-\pi,\pi]$,

\[ \newcommand\ds{\displaystyle}
\begin{array}{rclcrcl}
\rho_{k,n}     
  &\egaldef& \ds{\lambda_n\pi_{k,n}\over2\mu_{k,n}},&&
\rho_{k\ell,n}
  &\egaldef& \ds{\lambda_n\pi_{k,n} p_{k\ell,n}\over2\mu_{k\ell,n}},\\[0.5ex]
m_{k,n}        
  &\egaldef& \ds{\rho_{k,n}\over1-\rho_{k,n}},&&
m_{k\ell,n}     
  &\egaldef& \ds\rho_{k\ell,n},\\[0.5ex]
\phi_{k,n}(\theta) 
  &\egaldef& \ds{(1-\rho_{k,n})e^{-\ci m_{k,n}\theta}
              \over1-\rho_{k,n} e^{\ci\theta}},&&
\phi_{k\ell,n}(\theta) 
  &\egaldef& \ds e^{\rho_{k\ell,n}(e^{\ci\theta}-1-\ci\theta)}.
\end{array}
\]

Define $\*F\0_n$ as in Section~\ref{sec:scal} and assume that its
cardinal is some fixed integer $K\geq1$. Lemmas~\ref{lem:ass:service}
and~\ref{lem:ass:pole} apply, taking $R(q)=1$, $T(q)=q$ and
$\xi_{k,n}=1$ for $q\geq1$ and $k\in\*F\0_n$. Thus,
when~\ref{ass:nonsat} holds, Theorem~\ref{thm:scal} can be used and
estimates of many performance measures can be derived, with
corresponding error terms.

Some questions of interest arise:
\begin{itemize}
\item which maximal efficiency can be expected from this system? 
\item how many vehicles should be provided?
\end{itemize}

To answer these questions, it is convenient to define the {\em loss
probability\/} as
\[
 \*P_{\rm loss}(n) \egaldef {\sum_{k=1}^n \mu_{k,n}\PP(Q_{k,n}=0)
                                \over\sum_{k=1}^n \mu_{k,n}}.
\]

$\*P_{\rm loss}(n)$ is the proportion of customers that are lost
because they arrive at an empty station. This is a good indicator of
the quality of service provided by the network. Under appropriate
conditions as $n\to\infty$:
\begin{eqnarray}
 \*P_{\rm loss}(n) 
 &\sim& {\sum_{k=1}^n \mu_{k,n}\PP(X_{k,n}=0)
                                \over\sum_{k=1}^n \mu_{k,n}} \nonumber\\
 &\sim& 1-{\lambda_n\over2\sum_{k=1}^n\mu_{k,n}}.\label{eq:praxiloss}
\end{eqnarray}

The last expression is a decreasing function of $\lambda_n$, which is
itself bounded by $\lambda\0_n$. Therefore, the minimum loss
probability is attained when $\lambda_n\to\lambda\0_n$; this happens
with
\[
 m_n=(1-\theta_n)\hat m\0_n,\ \ \lim_{n\to\infty}\theta_n=0,
\]
where $\theta_n$ is chosen to satisfy the assumptions of
Theorem~\ref{thm:scal}-\romi. With this choice of $m_n$,
(\ref{eq:praxiloss}) holds with
\[
 \lambda_n=\lambda\0_n(1+O(\theta_n)),
\]
which is asymptotically optimal. Consequently, a ``good'' value for
$m_n$ is $m_n=\hat m\0_n$, and having a number of vehicle proportional to
the number of stations can be a poor choice, especially when some
stations are more loaded than others. These stations act as {\em
bottlenecks\/} of the system, which should be removed by altering the
routing probabilities.

\section{General remarks}\label{sec:concl} 

$\quad$ First, a chief difficulty of the analysis is due to the need
of dealing with rate of convergence and limits of {\em densities}:
this is the field of Berry-Esseen theorems and large deviations.

Secondly, the results have been obtained under several technical
assumptions (especially  {\em uniformity}), which in some
sense are unavoidable. This means precisely that the choice of
conditions slightly different from \ref{ass:pole}, \ref{ass:service}
and \ref{ass:nonsat} would have led to different families of limit
laws having infinitely divisible distributions.

 In particular, from a physical point of view, it is worth commenting
on equation (\ref{eq:lap:xi}). The inequality $\xi_{k,n} \geq 1$
implies that the maximum service rate of the queues in $\*F\0_n$ is
reached from below; this is not the case if $0<\xi_{k,n}<1$, and the
analysis was omitted, since the technicalities involved would have
made the text unnecessarily obscure. At last, the case $\xi_{k,n} \leq
0$ dealing with other types of singularities (for instance
logarithmic), was not carried out, and would yield other limit laws.

The future class of problems of interest concerns some non-product
form networks.

\begin{secappendix}{Appendix}
\subsection{A bound on periodic characteristic functions}
\label{app:bound}

One of the problems arising in the computation of convergence rates in
the Central Limit Theorem is to find upper bounds on the modulus of a
characteristic function $\phi(\theta)$ for $\theta$ away from
$0$. One typical property used can be stated as follows:
\begin{center}\it
 there exist $\theta_0>0$ and $a<1$ such that, for all
$|\theta|>\theta_0$, $|\phi(\theta)|<a$.
\end{center}

It is pointed out in Feller~\cite{Fel:1} that this condition is
usually easy to fulfill in practice, as long as $X$ does not have a
lattice distribution. Unfortunately, we are in the lattice case and thus
must cope with the periodicity of $\phi$.

Next lemma shows how a bound on $|\phi(\theta)|$ can be derived for
$|\theta|\leq\pi$.

\begin{lem}\label{lem:majcar}
Let $X$ be an integer-valued random variable with distribution
$P(X=k)=p_k$, $k\in\NN$. Define
\[
\gamma^2
  \egaldef \sum_{k=0}^\infty {p_{2k}p_{2k+1}\over p_{2k}+p_{2k+1}}
  \leq \min\Bigl(\var X,{1\over4}\Bigr),    
\]
where the summands are taken to be zero when $p_{2k}=p_{2k+1}=0$.
Then, for any $\theta\in[-\pi,\pi]$, the characteristic function $\phi$
of $X$ satisfies:
\begin{equation}\label{eq:majcar}
|\phi(\theta)|\leq \exp\Bigl(-{\gamma^2\over5}\theta^2\Bigr).
\end{equation}
\end{lem}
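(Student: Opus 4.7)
The plan is to exploit the pairing of consecutive even-odd atoms that appears in the definition of $\gamma^2$. Let $s_k \egaldef p_{2k}+p_{2k+1}$ and, when $s_k>0$, set $q_k \egaldef p_{2k}/s_k$. Then
\[
 \phi(\theta) = \sum_{k\geq 0} e^{2\ci k\theta}\bigl(p_{2k}+p_{2k+1}e^{\ci\theta}\bigr)
              = \sum_{k\geq 0} s_k\,\phi_k(\theta),
\]
with $\phi_k(\theta)\egaldef e^{2\ci k\theta}\bigl(q_k+(1-q_k)e^{\ci\theta}\bigr)$. Since $\sum_k s_k=1$, convexity of $z\mapsto|z|^2$ (equivalently Cauchy--Schwarz) gives
\[
 |\phi(\theta)|^2 \leq \sum_{k\geq 0} s_k\,|\phi_k(\theta)|^2.
\]

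Next I would compute $|\phi_k(\theta)|^2$ explicitly. A direct expansion yields
\[
 |\phi_k(\theta)|^2 = q_k^2+(1-q_k)^2+2q_k(1-q_k)\cos\theta = 1-2q_k(1-q_k)(1-\cos\theta),
\]
and the crucial observation $s_k\,q_k(1-q_k)=p_{2k}p_{2k+1}/s_k$ turns the weighted sum into exactly $\gamma^2$. This gives the clean intermediate estimate
\[
 |\phi(\theta)|^2 \leq 1-2\gamma^2(1-\cos\theta).
\]

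To finish, I would apply $1-x\leq e^{-x}$ and then replace $1-\cos\theta$ by a quadratic lower bound. The final step is thus the elementary inequality $1-\cos\theta \geq \theta^2/5$ on $[-\pi,\pi]$, which is what fixes the constant $1/5$ in the statement. Writing $(1-\cos\theta)/\theta^2=\tfrac12\bigl(\sin(\theta/2)/(\theta/2)\bigr)^2$, this ratio is even, decreasing on $[0,\pi]$, with minimum value $2/\pi^2 > 1/5$; so the inequality holds with a small margin over the whole interval. This is really the only quantitative point in the proof, and it is where the constant $1/5$ (rather than, say, $1/2$, which would be valid only near $0$) comes from.

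The auxiliary inequalities $\gamma^2\leq \var X$ and $\gamma^2\leq 1/4$ are side remarks: the first follows from the law of total variance applied to the partition of $\NN$ into the pairs $\{2k,2k+1\}$ (the conditional variance within a pair is $q_k(1-q_k)$, yielding $\var X \geq \sum_k s_k q_k(1-q_k)=\gamma^2$); the second follows from $q_k(1-q_k)\leq 1/4$ and $\sum_k s_k\leq 1$. No real obstacle is expected: the main step is the convexity bound on $|\phi|^2$, and the rest is routine trigonometry.
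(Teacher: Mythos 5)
Your proof is correct, and it reaches the same intermediate estimate as the paper but by a genuinely different route. The paper applies the triangle inequality to $|\phi(\theta)|$ directly, pairing $p_{2k}$ with $p_{2k+1}$ and bounding each modulus by
\[
 |p_{2k}+p_{2k+1}e^{\ci\theta}|
   = \sqrt{(p_{2k}+p_{2k+1})^2-2p_{2k}p_{2k+1}(1-\cos\theta)}
   \leq p_{2k}+p_{2k+1}-\frac{p_{2k}p_{2k+1}}{p_{2k}+p_{2k+1}}(1-\cos\theta),
\]
which, after summing, gives $|\phi(\theta)|\leq 1-\gamma^2(1-\cos\theta)$; it then applies $1-\cos\theta\geq\frac{2}{\pi^2}\theta^2$, $1-x\leq e^{-x}$, and $\frac{2}{\pi^2}>\frac15$. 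You instead rewrite $\phi$ as a convex combination $\sum_k s_k\phi_k(\theta)$ and use Jensen (or Cauchy--Schwarz) on $|\cdot|^2$, arriving at the slightly sharper $|\phi(\theta)|^2\leq 1-2\gamma^2(1-\cos\theta)$ without ever having to linearize a square root; you then take square roots only at the very end. The trigonometric step $(1-\cos\theta)/\theta^2\geq 2/\pi^2>1/5$ is the same in both arguments, so the numerical constant $1/5$ arises identically. Your Jensen bound is tighter than the square of the paper's bound, though both give the same final exponent because the factor of $2$ you gain is cancelled by the square root. Your side proof of $\gamma^2\leq\var X$ via the law of total variance over the partition into pairs $\{2k,2k+1\}$ is also different from the paper's (which argues by Taylor expansion of $\phi$ near $0$) and is arguably cleaner; the $\gamma^2\leq\frac14$ argument coincides.
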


\begin{proof}{}
We have
\[
|\phi(\theta)| 
  = \Bigl| \sum_{k=0}^\infty p_ke^{\ci k\theta}\Bigr|
  \leq \sum_{k=0}^\infty \Bigl| p_{2k}+p_{2k+1}e^{\ci\theta}\Bigr|.
\]

Moreover,
\begin{eqnarray*}
\Bigl| p_{2k}+p_{2k+1}e^{\ci\theta}\Bigr|
  &=& \sqrt{(p_{2k}+p_{2k+1}\cos\theta)^2+p_{2k+1}^2\sin^2\theta}\\
  &=& \sqrt{(p_{2k}+p_{2k+1})^2-2p_{2k}p_{2k+1}(1-\cos\theta)}\\
  &\leq& p_{2k}+p_{2k+1}
   -{p_{2k}p_{2k+1}\over p_{2k}+p_{2k+1}}(1-\cos\theta).
\end{eqnarray*}

Hence, for $\theta\in[0,\pi]$,
\begin{eqnarray*}
|\phi(\theta)|
   &\leq& 1 - (1-\cos\theta)
     \sum_{k=0}^\infty{p_{2k}p_{2k+1}\over p_{2k}+p_{2k+1}}\\
   &\leq& 1-{2\over\pi^2}\theta^2\gamma^2\\
   &\leq& \exp\bigl(-{2\gamma^2\over\pi^2}\theta^2\bigr),
\end{eqnarray*}
which yields~(\ref{eq:majcar}). That $\gamma^2\leq\var X$ can be seen
by a Taylor expansion of $\phi$ in the neighborhood of $\theta=0$,
while the relation $\gamma^2\leq1/4$ follows from the trivial
inequality
\[
  {p_{2k}p_{2k+1}\over p_{2k}+p_{2k+1}}\leq{p_{2k}+p_{2k+1}\over4}.
\]
\end{proof}

$\gamma$ has the desirable property to be zero when $X$ is an integer
variable with a span strictly greater than $1$, in which case the
period of $\phi$ is less than $2\pi$. Another desirable property would
be that $\gamma\to\infty$ when the moments of $X$ are unbounded; since
$\gamma\leq1/2$, this is obviously not possible here. That this
``feature'' is  somehow unavoidable can be seen on the following example:
\begin{eqnarray*}
\phi(\theta)
  &\egaldef& {2+e^{\ci\theta}\over4}
              +{1\over4}\sum_{k=2}^\infty
                   {e^{\ci k\theta}\over k(k-1)}\\
  &=& {1+e^{\ci\theta}\over2}+(1-e^{\ci\theta})\ln(1-e^{\ci\theta}).
\end{eqnarray*}

The random variable having $\phi$ as characteristic function admits no
finite moment of order greater or equal to $1$, but no bound on
$|\phi|$ is substantially better than~(\ref{eq:majcar}).

\subsection{Proof of Propositions~\protect\ref{pro:clt:norm} 
                          and~\protect\ref{pro:lt:pole}}
\label{app:proof}

\begin{proof}{of Proposition~\ref{pro:clt:norm}}
Using a Fourier inversion formula, the left hand side of
(\ref{eq:clt:norm}) can be rewritten as
\[
 {\sigma_n\over2\pi}\int_{-\pi}^\pi e^{-\ci\theta x}\phi_n(\theta)d\theta
    -{1\over2\pi}\int_{-\infty}^\infty 
         e^{-\ci{x\over\sigma_n}u}e^{-{u^2\over2}}du.
\]

Thus, our goal is to evaluate the quantity
\begin{eqnarray*}
I_n 
  &\egaldef & \int_{-\pi}^\pi e^{-\ci\theta x}\phi_n(\theta)d\theta
    -\int_{-\infty}^\infty 
        e^{-\ci\theta x}e^{-{\sigma_n^2\theta^2\over2}}d\theta\\
  &=& \int_{-\delta_n}^{\delta_n}
        e^{-\ci\theta x}\Bigl(\phi_n(\theta)
                 -e^{-{\sigma_n^2\theta^2\over2}}\Bigr)d\theta\\
  & &{}
      -\int_{|\theta|\geq\delta_n}\!\!e^{-\ci\theta x}
             e^{-{\sigma_n^2\theta^2\over2}}d\theta
      +\int_{|\theta|\in[\delta_n,\pi]}
          e^{-\ci\theta x}\phi_n(\theta)d\theta.
\end{eqnarray*}

It is known that 
\[
 \int_{|\theta|\geq\delta_n}
     e^{-{\sigma_n^2\theta^2\over2}}d\theta
   \approx {2\over\sigma_n^2\delta_n}
              e^{-{\sigma_n^2\delta_n^2\over2}},
\]
applying Lemma~\ref{lem:majcar} to $\phi_n$, we get
\begin{equation}\label{eq:majphin}
\left|\int_{\delta_n\leq |\theta|\leq\pi}
           e^{-\ci\theta x}\phi_n(\theta)d\theta\right|
  \leq \int_{|\theta|\geq\delta_n}
    e^{-{\gamma_n^2\theta^2\over5}}d\theta
  = O\biggl({1\over\gamma_n^2\delta_n}
          e^{-{\gamma_n^2\delta_n^2\over5}}\biggr). 
\end{equation}

Finally, we obtain a bound on $|I_n|$ which is uniform in $x$:
\begin{eqnarray}\label{eq:In}
|I_n| &\leq& \int_{-\delta_n}^{\delta_n}
       \Bigl|\phi_n(\theta)
             -e^{-{\sigma_n^2\theta^2\over2}}\Bigr|d\theta\nonumber\\
    & &{}+ O\biggl({1\over\sigma_n^2\delta_n}
                e^{-{\sigma_n^2\delta_n^2\over2}}\biggr)
      + O\biggl({1\over\gamma_n^2\delta_n}
                e^{-{\gamma^2_n\delta_n^2\over5}}\biggr).
\end{eqnarray}

We proceed now to estimate the above integral, so that implicitly
$|\theta|\leq\delta_n$. The derivation relies on the following simple
inequality, valid for all complex numbers $x_1,\ldots,x_n$ and
$y_1,\ldots,y_n$:
\begin{equation}\label{eq:ineq}
|x_1\cdots x_n - y_1\cdots y_n|
  \leq \sum_{k=1}^n |x_1\cdots x_{k-1}||x_k-y_k||y_{k+1}\cdots y_n|,
\end{equation}
which will be used with $x_k=\phi_{k,n}(\theta)$ and 
$y_k=\exp(-\sigma_{k,n}^2\theta^2/2)$. 

The characteristic function $\phi_{k,n}$ of the random variable
$X_{k,n}$ satisfies (see for example Lo\`eve~\cite{Loe:1})
\begin{equation}\label{eq:majphi}
 \Bigl|\phi_{k,n}(\theta)-1+\sigma_{k,n}^2{\theta^2\over2}\Bigr|
   \leq \m{2+r}_{k,n}{|\theta|^{2+r}\over2}.
\end{equation}

Hence, using the inequality $|e^{-x}-1+x|\leq x^s/s$, valid for
all $x\geq0$ and $1<s\leq2$,
\begin{eqnarray}\label{eq:majdiffk}
\Bigl|\phi_{k,n}(\theta)-e^{-{\sigma_{k,n}^2\theta^2\over2}}\Bigr|
  &\leq& \Bigl|\phi_{k,n}(\theta)-1+\sigma_{k,n}^2{\theta^2\over2}\Bigr|
         +\Bigl|e^{-{\sigma_{k,n}^2\theta^2\over2}}
              -1+\sigma_{k,n}^2{\theta^2\over2}\Bigr|\nonumber\\
  &\leq& \m{2+r}_{k,n}{|\theta|^{2+r}\over2}
         +\sigma_{k,n}^{2+r}{|\theta|^{2+r}\over2}
  \;\leq\; \m{2+r}_{k,n}|\theta|^{2+r}.
\end{eqnarray}

To find an upper bound for $|\phi_{k,n}|$, assume first
$\sigma_{k,n}\delta_n\leq1$, so that
\begin{eqnarray}\label{eq:majmodk}
|\phi_{k,n}(\theta)|
  &\leq& 1-\sigma_{k,n}^2{\theta^2\over2}
         +\m{2+r}_{k,n}{|\theta|^{2+r}\over2}\nonumber\\
  &\leq& \exp(-\sigma_{k,n}^2
                   +\m{2+r}_{k,n}\delta_n^r){\theta^2\over2}.
\end{eqnarray}

In fact, (\ref{eq:majmodk}) also holds when $\sigma_{k,n}\delta_n\geq
1$, since in this case
\[
 -\sigma_{k,n}^2+\m{2+r}_{k,n}\delta_n^r
 \geq -\sigma_{k,n}^2+\sigma_{k,n}^{2+r}\delta_n^r
 \geq 0.
\]

 From~(\ref{eq:uan}), we can choose $n$ such that
$\sigma_{k,n}\leq\sigma_n/2$ and, using (\ref{eq:ineq}),
(\ref{eq:majdiffk}) and (\ref{eq:majmodk}), we find
\begin{eqnarray}\label{eq:majdiff}
\Bigl|\phi_n(\theta)-e^{-{\sigma_n^2\theta^2\over2}}\Bigr|
  &\leq&\sum_{k=1}^n\m{2+r}_{k,n}|\theta|^{2+r}
         \exp\Bigl(-\sigma^2_n+\sigma^2_{k,n}+\m{2+r}_n\delta_n^r
                  \Bigr){\theta^2\over2}\nonumber\\
  &\leq&\m{2+r}_n|\theta|^{2+r}
         \exp\Bigl(-\sigma^2_n{\theta^2\over8}\Bigr).
\end{eqnarray}

Equation~(\ref{eq:clt:norm}) follows, since the integral in (\ref{eq:In})
is bounded by
\begin{eqnarray*}
\int_{-\delta_n}^{\delta_n}
       \Bigl|\phi_n(\theta)-e^{-{\sigma_n^2\theta^2\over2}}\Bigr|d\theta
 &\leq& \m{2+r}_n\int_{-\infty}^\infty |\theta|^{2+r}
          \exp\Bigl(-\sigma^2_n{\theta^2\over8}\Bigr)d\theta\\
 &=& O\biggl({1\over\sigma_n}{\m{2+r}_n\over\sigma_n^{2+r}}\biggr).
\end{eqnarray*}

The proof of (\ref{eq:clt:norm2}) of the proposition is similar,
although the computations be more involved. Redefine $I_n$ as
\[
I_n \egaldef \int_{-\pi}^\pi e^{-\ci\theta x}\phi_n(\theta)d\theta
    -\int_{-\infty}^\infty e^{-\ci\theta x}
         \Bigl(1-\ci\bar\m3_n{\theta^3\over6}\Bigr)
            e^{-{\sigma_n^2\theta^2\over2}}d\theta,
\]

To find a bound for $|I_n|$, we have to estimate
\begin{eqnarray}\label{eq:diff:inteven}
\lefteqn{\biggl|\phi_n(\theta)-\Bigl(1-\ci\bar\m3_n{\theta^3\over6}\Bigr)
    e^{-{\sigma_n^2\theta^2\over2}}\biggr|}\qqqq\\
  &\leq& \biggl|\phi_n(\theta)
      -e^{-{\sigma_n^2\theta^2\over2}-\ci{\bar\m3_n\theta^3\over6}}\biggr|
      +\biggl|e^{-\ci{\bar\m3_n\theta^3\over6}}
              -1+\ci\bar\m3_n{\theta^3\over6}
       \biggr|e^{-{\sigma_n^2\theta^2\over2}}\nonumber.
\end{eqnarray}

The first part of the r.h.s.\ of (\ref{eq:diff:inteven}) is evaluated
as above with (\ref{eq:ineq}) and~(\ref{eq:majmodk}) replaced by
\[
\phi_{k,n}(\theta) 
  \leq \exp(-\sigma_{k,n}^2+\m3_{k,n}\delta_n){\theta^2\over2}.
\]

For the second part, we use the following inequality, valid for
$r\geq0$ (see e.g.\ Lo\`eve~\cite{Loe:1})
\[
 \biggl[{\m3_n\over \sigma_n^3}\biggr]^{1+{r\over3}}
    \leq{\m{3+r}_n\over\sigma_n^{3+r}},
\]
which yields
\[
 \biggl|e^{-\ci{\bar\m3_n\theta^3\over6}}
              -1+\ci\bar\m3_n{\theta^3\over6}\biggr|
 \leq \Bigl|\m3_n{\theta^3\over6}\Bigr|^{1+{r\over3}}
 \leq {\m{3+r}_n\over\sigma_n^{3+r}}{\sigma_n^{3+r}|\theta|^{3+r}\over6},
\]
and (\ref{eq:clt:norm2}) follows. 
\end{proof}

\begin{proof}{of Proposition~\ref{pro:lt:pole}}
The proof of this proposition is similar to the proof of
Proposition~\ref{pro:clt:norm} and is only sketched here. Define
\begin{eqnarray*}
 \omega(u) &\egaldef& {1\over1-\ci u},\\
 y_n &\egaldef& {\sum_{j\in\*F\0_n}m_{j,n} +x\over\alpha_n},
\end{eqnarray*}
and 
\begin{eqnarray}
I_n &\egaldef& 
  \alpha_n\int_{-\pi}^\pi 
     e^{-\ci\theta\alpha_ny_n}\omega_n^{\xi_n}(\theta)
       \prod_{k\in\*F\0_n}\psi_{k,n}(\theta)\wh\phi_n(\theta)d\theta\nonumber\\
  & & {}-\int_{-\infty}^\infty \!\!e^{-\ci uy_n}\omega^{\xi_n}(u)
      e^{-{\hat\sigma_n^2\over\alpha_n^2}{u^2\over2}}du
                                                                 \nonumber\\
  &=& \int_{-\pi\alpha_n}^{\pi\alpha_n}
        e^{-\ci uy_n}\omega_n^{\xi_n}(u/\alpha_n)
       \Bigl[\prod_{k\in\*F\0_n}\psi_{k,n}(u/\alpha_n)-1\Bigr]
           \wh\phi_n(u/\alpha_n)du                               \nonumber\\
  & & {}+\int_{-\pi\alpha_n}^{\pi\alpha_n}
        e^{-\ci uy_n}\omega_n^{\xi_n}(u/\alpha_n)\Bigl[\wh\phi_n(u/\alpha_n)
        -e^{-{\hat\sigma_n^2\over\alpha_n^2}{u^2\over2}}\Bigr]du
                                                                 \nonumber\\
  & & {}+\int_{-\pi\alpha_n}^{\pi\alpha_n}
        e^{-\ci uy_n}\Bigl[\omega_n^{\xi_n}(u/\alpha_n)
              -\omega^{\xi_n}(u)\Bigr]
        e^{-{\hat\sigma_n^2\over\alpha_n^2}{u^2\over2}}du
                                                                 \nonumber\\
  & & {}-\int_{|u|\geq\pi\alpha_n}e^{-\ci uy_n}\omega^{\xi_n}(u)
       e^{-{\hat\sigma_n^2\over\alpha_n^2}{u^2\over2}}du.
                                                         \label{eq:decompIn}
\end{eqnarray}

The evaluation of these integrals depends on the following
straightforward estimations, valid for $|u|<\pi\alpha_n$,
\begin{eqnarray*}
|\omega_n^{\xi_n}(u/\alpha_n)|
  &=& O\biggl({1\over(1+u^2)^{\xi_n/2}}\biggr),\\
|\omega_n^{\xi_n}(u/\alpha_n)-\omega^{\xi_n}(u)|
  &=& O\biggl({1\over\alpha_n}{u^2\over(1+u^2)^{\xi_n}}\biggr),\\
\Bigl|\prod_{k\in\*F\0_n}\psi_{k,n}(u/\alpha_n)-1\Bigr|
  &=& O\biggl({1+|u|\over\alpha_n}\biggr),
\end{eqnarray*}
and on~(\ref{eq:majdiff}), which yields for $|u|<\alpha_n\hat\delta_n$,
\[
 \Bigl|\wh\phi_n(u/\alpha_n)
   -e^{-{\hat\sigma_n^2\over\alpha_n^2}{u^2\over2}}\Bigr|
  = O\biggl({\hat\m{2+r}_n\over\alpha_n^{2+r}}\biggr)u^{2+r}
      \exp\Bigl(-{\hat\sigma_n^2\over\alpha_n^2}{u^2\over8}\Bigr),
\]
\[
 \Bigl|\wh\phi_n(u/\alpha_n)\Bigr|
  \leq \exp\Bigl(-{\hat\sigma_n^2\over\alpha_n^2}{u^2\over4}\Bigr).
\]
  
Moreover, we use the following approximation, valid for $a,b>0$ and
for sufficiently small $z$:
\[
 J(a,b,z)
  \egaldef\int_{-\infty}^\infty {|u|^a\over(1+u^2)^b}e^{-z^2u^2}du
  = O(1) + O(z^{2b-a-1}).
\]

These relations, together with~(\ref{eq:decompIn}), yield:
\begin{eqnarray*}
I_n 
  &=& O\biggl({1\over\alpha_n}\biggr)
           J\Bigl(1,{\xi_n\over2},{\hat\sigma_n\over2\alpha_n}\Bigr)
     +O\biggl({\hat\m{2+r}_n\over\alpha_n^{2+r}}\biggr)
       J\Bigl(2+r,{\xi_n\over2},{\hat\sigma_n\over\sqrt8\alpha_n}\Bigr)\\
  & & {}+O\biggl({1\over\alpha_n}\biggr)
           J\Bigl(2,\xi_n,{\hat\sigma_n\over\sqrt2\alpha_n}\Bigr)\\
  & & {}+O\biggl(\Bigl({\hat\sigma_n\over\alpha_n}\Bigr)^{\xi_n-1}\biggr)
               \int_{v\geq\pi\hat\sigma_n} 
                        v^{-\xi_n}e^{-{v^2\over2}}dv\\
  & & {}+O\biggl(\Bigl({\hat\gamma_n\over\alpha_n}\Bigr)^{\xi_n-1}\biggr)
                \int_{v\geq\delta_n\hat\gamma_n} 
                        v^{-\xi_n}e^{-{v^2\over2}}dv\\
  &=&O\biggl({1\over\alpha_n}+{\hat\m{2+r}_n\over\hat\sigma_n^{2+r}}
                  \Bigl({\hat\sigma_n\over\alpha_n}\Bigr)^{2+r}
                   +{\hat\m{2+r}_n\over\hat\sigma_n^{2+r}}
                  \Bigl({\hat\sigma_n\over\alpha_n}\Bigr)^{\xi_n-1}
                   \biggr)\\
  & &\mbox{}+O\biggl({e^{-{\hat\gamma_n^2\hat\delta_n^2\over5}}
               \over\hat\gamma_n^2\hat\delta_n^{\xi_n+1}\alpha_n^{\xi_n-1}}
        \biggr).
\end{eqnarray*}

To conclude the proof of (\ref{eq:pro:lt:pole}), the second term
coming in the definition of $I_n$ is evaluated using Parseval's
identity and classical tools of complex analysis (see e.g.\ Lavrentiev
and Chabat~\cite{LavCha:1}). This yields
\[
\int_{-\infty}^\infty e^{-\ci y_nu}\omega^{\xi_n}(u)
     e^{-{\hat\sigma_n^2\over\alpha_n^2}{u^2\over2}}du
 = {y_n^{\xi_n-1}e^{-y_n}\over\Gamma(\xi_n)}
    \biggl[1+O\biggl({\hat\sigma_n^2\over\alpha_n^2}\biggr)\biggr].
\]
\end{proof}

\end{secappendix}

\bibliography{queues}
\bibliographystyle{acm}

\end{document}